\documentclass[onecolumn,notitlepage,showpacs,12pt]{revtex4-2}
\usepackage{xcolor}
\usepackage[utf8]{inputenc}
\usepackage{graphicx}
\usepackage{tikz-cd}
\usepackage{dcolumn}
\usepackage{amsmath}
\usepackage{amssymb}
\usepackage{mathtools}
\usepackage{cancel}
\usepackage{hyperref}
\usepackage{float}
\allowdisplaybreaks
\usepackage[margin=1.2in]{geometry}

\newtheorem{theorem}{Theorem}[section]
\newtheorem{corollary}[theorem]{Corollary}
\newtheorem{proposition}[theorem]{Proposition}

\newtheorem{definition}[theorem]{Definition}
\newenvironment{proof}[1][Proof]{\begin{trivlist}
\item[\hskip \labelsep {\bfseries #1}]}{\end{trivlist}}

\newenvironment{remark}[1][Remark]{\begin{trivlist}
\item[\hskip \labelsep {\bfseries #1}]}{\end{trivlist}}

\newcommand{\qed}{\nobreak \ifvmode \relax \else
	\ifdim\lastskip<1.5em \hskip- \lastskip
	\hskip 0.5em plus0em minus0.5em \fi \nobreak
	\vrule height0.75em width0.5em depth0.25em\fi}

\begin{document}

\title{Conformal Killing--Yano Ricci solitons: Structure, compatibility, and rigidity}

\author{Mohammadjavad \surname{Habibivostakolaei}}
\email{mjhabibi@hnas.ac.cn}
\affiliation{Institute of Mathematics, Henan Academy of Sciences, 228 Mingli Road, Zhengzhou 450046, Henan, China}

\author{Yen-Kheng \surname{Lim}}
\email{yenkheng.lim@gmail.com, yenkheng.lim@xmu.edu.my}
\affiliation{School of Mathematics and Physics, Xiamen University Malaysia, 43900 Sepang, Malaysia}

\author{Abbas M \surname{Sherif}}
\email{abbasmsherif25@gmail.com}
\affiliation{Institute of Mathematics, Henan Academy of Sciences, 228 Mingli Road, Zhengzhou 450046, Henan, China}

\begin{abstract}
We introduce a geometric structure -- a conformal Killing--Yano Ricci soliton (CKY--RS) -- that couples conformal Ricci soliton (CRS) geometry to conformal Killing--Yano (CKY) 2--forms. The soliton field of the CRS geometry is given by the divergence of the CKY 2--form. We introduce a conserved CKY--Cotton current and derive a compatibility identity relating the Cotton tensor, the CRS obstruction tensor, and the CKY 2--form. In 4--dimensional Lorentzian signature, we show that, under non-degeneracy and closedness assumptions on the CKY form, a CKY--RS structure forces the conformal representative to be locally Kerr--NUT--(A)dS. For a closed non-degenerate CKY on a Kerr--NUT--(A)dS background, the conformal deformation is necessarily trivial. For Einstein backgrounds of arbitrary dimension and signature, the conformal factor satisfies an eigenvalue equation and an Obata--type Hessian equation. If the background is also compact or a CKY orbit is periodic, the conformal factor is an invariant of the CKY--flow and we obtain simple spectral obstructions to non-trivial CKY--RS structures. From the Hessian equation, we obtain obstruction and classification results for the non-trivial conformal sector, including product/Brinkmann geometries and a Weyl--aligned branch. Finally, we give explicit constructions for static spherically symmetric geometries and BTZ backgrounds, including a CKY--RS realization with a time-dependent conformally flat representative. These results provide a geometric framework for studying CRS with hidden symmetry structure, with potential applications to exact geometries in general relativity.  
\end{abstract}

\maketitle

\section{Introduction}


Ricci solitons (RS), which are self-similar solutions to the Ricci flow, have connected geometry, geometric analysis, and gravitational physics, and served as a model for singularity formation and gravitational collapse \cite{Hamilton1,Hamilton2,Perelman1,Perelman2}. These structures appear as gravitational Instantons and in the studies of holographic dualities, etc. 

A natural generalization of Ricci solitons is conformal Ricci soliton (CRS), where the metric is allowed to change under a diffeomorphism (see Catino \textit{et al.} \cite{Catino1}). These structures include the Einstein metrics, conformally Einstein metrics, and Ricci solitons as special cases \cite{Brinkmann1,Cao1,Cao2,Gover1}. These structures have also been investigated on contact manifolds and in connection to almost Ricci solitons. Rigidity results, mostly under compactness, constant scalar curvature, or gradient assumptions, have been established (see Catino \textit{et al.}, \cite{Catino1}). 

On the other hand, Conformal Killing--Yano (CKY) tensors which generalize Killing–Yano tensors, go far back to works by Tachibana \cite{Kashiwada1,Kashiwada2} provides another source of rigidity. Yano was the first to identify the connection between Killing forms and the existence of first integrals of the geodesic equations on manifolds \cite{Yano1}. The first direct application to integrating equations of motion was implemented by Walker and Penrose in \cite{Walker1}. CKY tensors now feature extensively in mathematical relativity and supergravity theories. The closed CKY forms and associated vector fields encode hidden symmetry structures and play a fundamental role in the geometry of Kerr–NUT–(A)dS spacetimes and related integrability properties \cite{Kashiwada2,Frolov1,Frolov2,Krtous1}. Since a CKY 2--form is conformally covariant, with the Kerr--NUT--(A)dS classification, one expects there to exist a family of metrics conformal to the Kerr--NUT--(A)dS family admitting CKY 2--form and such a family would deserve classification in their own right. For a closed CKY tensor, in particular, the symmetric derivative of the associated vector field is controlled by the Ricci tensor. As a consequence, on an Einstein manifold the associated vector becomes a Killing vector field. This simple observation suggests that the CKY tensor and a RS vector field need not be regarded as independent pieces of geometric data: the hidden symmetry itself can provide the vector field for the soliton structure. The relation between CCKY tensors and the Kodama vector field in warped product geometries provides another example of the geometric significance of this structure and emphasizes the geometric significance of this vector field in spherical symmetry \cite{Kinoshita1}.

These are all strong motivations to connect these two structures: CKY tensors and conformal Ricci solitons. The purpose of this paper is to systematize this connection. We introduce a \textit{conformal Killing--Yano Ricci soliton} (CKY--RS) as a CRS in which the soliton vector is precisely the vector field associated with a CKY 2--form through its divergence. Hence the conformal factor, the CKY tensor, and the soliton vector are coupled. The compatibility of the coupling through the CKY--RS produces additional curvature constraints that are absent when the two structures are considered independently.

We investigate the resulting rigidity and classification properties of our structure. In 4--dimensional Lorentzian spacetimes, assuming that the CKY 2--form is closed and non-degenerate, we show that the conformal representative is locally Kerr--NUT--(A)dS when the associated vector is a conformal Killing vector field. Conversely, for a principal CCKY 2--form on a Kerr--NUT--(A)dS background, preservation of the closed non-degenerate CKY structure under the conformal transformation forces the conformal factor to be constant. Thus, the principal hidden symmetry structure in this setting obstructs the existence of non-trivial CKY--RS structures.

For Einstein backgrounds of general dimensions and signature, the CKY--RS equations reduce to an eigenvalue equation for the conformal factor and an overdetermined Obata--type Hessian equation. This system immediately allows the use of standard arguments in geometric classification theory. In particular, the conformal factor must be a positive eigenfunction of the CKY vector flow. If one imposes compactness or periodicity of a CKY orbits, one obtains simple spectral obstructions to non--trivial CKY--RS structures. The remaining CKY--RS equations give additional geometric classification: the non--trivial branch produces a Ricci--flat background in which the gradient of the conformal factor is parallel, producing product and Brinkmann metrics, and a non--Ricci--flat branch subject to a Weyl--alignment condition. In the case of a Lorentzian signature the causal character of the gradient of the conformal factor is tied to the sign of the Einstein constant. 

Finally, we demonstrate explicit CKY--RS realizations on some Lorentzian geometries. In a spherically symmetric spacetime, the CCKY 2--form is unique and its associated vector is the Kodama vector. This gives a direct realization of the proposed CKY--RS structure in a setting that is of particular relevance to general relativity, where the Kodama vector provides a ``time'' direction when there is no stationary Killing field \cite{Kinoshita1}. For static and spherically (circularly) symmetric metrics, CKY--RS structures yields a parametrized family of metrics. On the 3--dimensional BTZ background, the CKY--RS structure produces a time-dependent conformally flat representative. These examples complement the rigidity results and illustrate how CKY--RS structures can produce non--trivial conformal RS geometries. More broadly, the CKY--RS framework provides a geometric setting in which a combination of conformal soliton structures and the geometry of CKY tensors may have potential applications to exact spacetime geometries.

The paper is organized as follows. In Section \ref{sec-2}, we briefly review RS, CRS, and CKY tensors, and fix our conventions. Section \ref{sec-3} introduces the CKY--RS structure and derives its basic compatibility identities, including a conserved CKY--Cotton current. We then establish a 4--dimensional Lorentzian rigidity result and its relation to the Kerr--NUT--(A)dS family of metrics. In Section \ref{sec-4}, we specialize to CKY--RS structures with Einstein backgrounds and develop a resulting spectral obstruction and classification from an Obata--type Hessian equation. Section \ref{sec-5} gives explicit CKY--RS constructions on spherically symmetric and BTZ geometries. In Section \ref{sec-6}, we discuss the geometric and general relativistic implications of the construction and outline several potential directions for future work.

Throughout, the paper emphasizes the geometry of the coupled system of the CKY--RS structure. The connection with general relativity is not simply motivational: The ingredients of Einstein Lorentzian metrics, principal CKY forms, Petrov type D geometries, the Kerr--NUT--(A)dS family, the Kodama vector, and BTZ geometry all arise naturally as some realizations of the CKY--RS equations. The CKY--RS construction therefore provides a framework in which hidden symmetry and geometries from the Ricci--flow geometry can be studied within a common conformal--geometric setting.


\section{Preliminaries}\label{sec-2}


In this section we introduce a new geometric structure: a conformal Killing--Yano Ricci soliton (CKY--RS). This structure connects the dynamics of the Ricci tensor via the Hamilton's normalized Ricci flow \cite{Hamilton1,Hamilton2} and integrability of geodesic and Hamilton-Jacobi equations via hidden symmetries (see, e.g. \cite{Kashiwada1,Kashiwada2,Frolov1,Frolov2}).

\subsection{Conformal Ricci soliton}

We introduce the necessary background structures for our new construction. Let us first recall the basic notion of a Ricci soliton.

A \textit{Ricci soliton} (RS) is a metric together with a (smooth) vector field $y^a$ which satisfies \cite{Hamilton1,Hamilton2}
\begin{align}
R_{ab}+\frac{1}{2}\mathcal{L}_yg_{ab}=\lambda g_{ab},\label{ricci-soliton}
\end{align}
where $R_{ab}$ is the Ricci tensor, $y^a$ is the \textit{soliton vector field} and $\lambda$ is the \textit{soliton constant}. The structure itself is given by the triple $(M,g_{ab},y^a,\lambda)$, with $M$ being the manifold on which the structure is defined. 

The soliton is called \textit{shrinking}, \textit{steady}, or \textit{expanding} if the soliton field satisfies $\lambda>0,\lambda=0$, or $\lambda<0$, respectively. 

RS are considered generalizations of the Einstein metrics: Einstein metrics is recovered for the trivial $y^a=0$ (trivial RS). In this sense, nontrivial RS represent self-similar deformations of Einstein geometries generated by the (non--trivial) $y^a$--flow.

When $y^a$ is exact, i.e. $y_a=\nabla_af$ for some function $f$, the soliton is called a gradient RS, and \eqref{ricci-soliton} reduces to
\begin{align}
R_{ab}+f_{ab}=\lambda g_{ab}=\left(\frac{R+\nabla f}{n}\right)g_{ab},\label{grad1-ricci-soliton}
\end{align}
where $f_{ab}=\mbox{Hess}(f)$. Gradient RS play a central role in geometric analysis and in singularity formation under the Ricci flow.

We now introduce a generalization of RS relevant for our construction.
\begin{definition}
A manifold $(M,g_{ab})$ is called a \textbf{conformal Ricci soliton} (CRS) if there exists a conformal change of metric $\tilde{g}_{ab}=e^{2u}g_{ab}$, with $u\in C^{\infty}(M)$, such that the conformal metric satisfies the RS equation \eqref{ricci-soliton} \cite{Catino1}. The quintuple $(M,g_{ab},u,y^a,\lambda)$ is called the CRS structure.
\end{definition}

CRS therefore generalizes the usual Ricci soliton by imposing the soliton structure on a conformal metric. The conformal factor, $u$, becomes a degree of freedom, forming part of the structure.

The soliton vector field on a CRS is not imposed and is in general determined by the manifold geometry. The main purpose of the CKY-RS construction is to provide a natural geometric origin of this vector field.

\subsection{Conformal Killing-Yano tensors}

We briefly introduce the conformal Killing--Yano tensor which is the second ingredient of the new geometric structure. This tensor generalizes Killing--Yano tensors and encode hidden symmetries that are responsible for the integrability and separability of the geodesic and Hamilton--Jacobi equations in four and higher dimensional gravity theories (see, for example \cite{Walker1,Frolov1,Frolov2}).
\begin{definition}
A rank--two conformal Killing Yano (CKY) tensor on an $n$--dimensional pseudo--Riemannian manifold is a 2--form $Y_{ab}$ satisfying \cite{Kashiwada1,Kashiwada2}
\begin{align}\label{cky-eq}
\nabla_cY_{ab}&=2g_{c[a}\xi_{b]}+z_{abc},\nonumber\\
z_{abc}&=z_{[abc]}=(dY)_{abc},\\
\xi_a&=\frac{1}{n-1}\nabla^bY_{ba}.\nonumber
\end{align}
The equation \eqref{cky-eq} is the (rank 2) CKY equation, and the vector field $\xi_a$ obtained from the divergence of $Y_{ab}$ will be called the CKY vector field. The notation $\xi[Y]$ will sometimes be used to indicate $\xi^a$ is the vector field associated to the CKY 2--form $Y_{ab}$. Furthermore, since $\xi^a$ is the divergence of a 2--form, it is divergence--free.
\end{definition}

A CKY tensor $Y_{ab}$ is said to be closed if
\begin{align}
z_{abc}=(dY)_{abc}=0,
\end{align} 
and is abbreviated CCKY tensor. The CCKY tensor is particularly of interest in gravity and supergravity theories because their integrability conditions impose strong restrictions on the curvature tensor. 

A crucial integrability condition is a rather neat relation on the Ricci tensor:
\begin{align}
\nabla_{(a}\xi_{b)}=-\frac{1}{n-2}R_{c(a}{Y_{b)}}^c.\label{ccky-int1}
\end{align}
Therefore, when the metric is Einstein, i.e.
\begin{align}
R_{ab}\propto g_{ab},
\end{align}
the anti-symmetry of $Y_{ab}$ forces
\begin{align}
\nabla_{(a}\xi_{b)}=0.
\end{align}
That is, the CKY vector field $\xi_a$ becomes a Killing vector. This is in fact the point: hidden symmetries encoded in CCKY tensors generate spacetime symmetries on Einstein metrics. Some details of classification results and integrability can be found in \cite{Frolov1,Frolov2,Krtous1} and associated references.

It is also important to emphasize that the Einstein condition is sufficient but not necessary for promoting $\xi_a$ to a KV: It is possible, in general, to have a background whose matter fields have a very special alignment with the CKY tensor for which the right hand side of \eqref{ccky-int1} vanishes.

CKY tensors are widely employed in analyzing integrability of curved backgrounds. A principal CKY 2--form implies Petrov type D (or O) in 4--dimensional spacetimes, and it underpins the hidden symmetry and separability structure in the Kerr--NUT--(A)dS class of metrics \cite{Walker1,Frolov1,Frolov2}. Higher-dimensional generalizations for black holes yield the principal Killing--Yano p--forms which render completely integrable the geodesic equations of motion. More recently, Kinoshita \cite{Kinoshita1} showed that the Kodama vector field of a warped-product spacetime arises as the associated vector field of a CCKY 2--form; we will return to this in Section \ref{sec-5}. On the other hand, the integrability relation \eqref{ccky-int1} shows that a CCKY 2--form is much more than providing a hidden symmetry: it constrains the Ricci tensor through its associated vector field. In particular, on an Einstein background CKY vector field is a KV, so that the hidden symmetry encoded by the CCKY tensor generates an ordinary spacetime symmetry. This observation will be important henceforth, where we identify this CKY--aligned vector field with the soliton vector of a conformal Ricci soliton.

Thus, the CKY tensor and the conformal Ricci soliton are not independent structures in the construction considered here. The tensor canonically fixes the soliton vector, while its integrability conditions constrain the geometry on which the soliton is defined.

Unless explicitly stated otherwise, all classification and rigidity statements in this paper are local. In particular, when we say that a conformal representative is Kerr--NUT--(A)dS, we mean locally isometric to a member of that family on a suitable open set. Global results require additional assumptions concerning completeness, topology, connectedness, regularity of the conformal factor, and the global behavior of the CKY tensor. The explicit BTZ CKY--RS construction for example  of Section \ref{sec-5} illustrates this transparently: the conformal factor vanishes at at the BTZ horizon, and the coformal factor diverges there. Therefore the corresponding CKY--RS structure is defined only on the region outside the BTZ horizon.

\section{Structure, compatibility, and rigidity of CKY--RS geometries}\label{sec-3}

In this section we introduce conformal Killing--Yano Ricci solitons and derive some integrability constraints.

\subsection{The CKY--RS structure}
\begin{definition}
A sextuple $(M,g_{ab},u,Y_{ab},\xi^a[Y],\lambda)$ is called a \textbf{CKY Ricci soliton} (CKY--RS or CKY--RS structure) if the CRS structure $(M,g_{ab},u,y^a,\lambda)$ is sourced by the CKY vector field $y^a=\xi^a[Y]$ of the CKY 2--form. That is, the underlying vector field is geometrically fixed by $Y_{ab}$. The CKY--RS will be called trivial if $u$ is constant, i.e. the conformal transformation is homothetic.
\end{definition}

The CKY--RS structure refines the CRS structure by tying its soliton field to the hidden symmetries encoded in a CKY tensor.

A particular simplification occurs when the CKY vector field is a conformal Killing vector (CKV) for the background metric $g_{ab}$:
\begin{align}
\mathcal{L}_{\xi}g_{ab}=2\sigma g_{ab},
\end{align}
for some function $\sigma$. Then, the soliton equation simplifies to
\begin{align}
\tilde R_{ab}=c\tilde g_{ab},\label{cky-crs-eq-0}
\end{align}
where 
\begin{align}
\lambda-(\mathcal{L}_{\xi}u+\sigma)=c.\label{funda7}
\end{align}
By Schur's lemma, $c$ is constant and the conformal metric is Einstein. That is, the CKY--RS structure with a soliton vector field that is a CKV ``selects'' a representative Einstein metric in the conformal class.

Importantly, the conformal factor is not unconstrained but requires a particular alignment with $\xi^a$ via \eqref{funda7}.

\subsection{A CKY--Cotton current identity}

The CRS integrability, with soliton field sourced by the CKY tensor, is coupled to the CKY integrability in a non-trivial way to yield a CKY--RS compatibility.

The CRS integrability is strongly linked to the so-called $\mathcal{D}$ tensor, $\mathcal{D}_{abc}^{(u,y)}$, where $y^a$ is the generic CRS soliton vector field. This tensor itself is an extension to the $\mathcal{D}$--tensor introduced by Cao and Chen \cite{Cao1,Cao2} in their study of the geometry of gradient RS. In particular, the $\mathcal{D}$--tensor introduces obstruction to the conformally Einstein integrability derived by Gover and Nurowski \cite{Gover1}. From the integrability condition of a generic CRS (see Theorem 9.6 of \cite{Catino1}) we have
\begin{align}
C_{abc}-Q^d\mathcal{W}_{dabc}&=\mathcal{D}_{abc}^{(u,y)},\label{cky-rs-comp-1}\\
Q_a&=(n-2)u_a-e^{2u}y_a,
\end{align}
where $C_{abc}$ is the Cotton tensor, $\mathcal{W}_{abcd}$ is the Weyl tensor, and $y^a$ is the CRS soliton vector field. 

For our work, the $\mathcal{D}$ tensor is elevated to 
\begin{align}
\mathcal{D}_{abc}^{(u,y)}\longrightarrow\mathcal{D}_{abc}^{(u,\xi[Y])}.
\end{align}
The usual CRS obstruction is no longer constructed from an arbitrary soliton vector: it is now sourced by the CKY 2-form through its associated vector field.

Define the following CKY--Cotton current
\begin{align}
\mathcal{J}_a:=C_{bca}Y^{bc}.\label{cky-rs-current-1}
\end{align}
Taking the divergence gives
\begin{align}
\nabla^a\mathcal{J}_a=(\nabla^aC_{bca})Y^{bc}+C_{bca}\nabla^aY^{bc}.
\end{align}
Let $S_{bc}:=\nabla^aC_{bca}$. Then
\begin{align*}
S_{[bc]}&=\frac{1}{2}\nabla^a(C_{bca}-C_{cba})\\
&=\frac{1}{2}\nabla^a(C_{bca}+C_{cab})\\
&=-\frac{1}{2}\nabla^aC_{abc}\\
&=0.
\end{align*}
Hence, $S_{bc}=S_{(bc)}$. Thus,
\begin{align*}
S_{bc}Y^{bc}=0.
\end{align*}
On the other hand,
\begin{align*}
C_{bca}\nabla^aY^{bc}&=C_{bca}(2g^{c[a}\xi^{b]}+z^{abc})\\
&=0,
\end{align*} 
since $C_{abc}$ is trace-free, and $z_{abc}$ is totally anti-symmetric. Therefore, the CKY--Cotton current $\mathcal{J}_a$ is conserved.

Next, contracting \eqref{cky-rs-comp-1} with $Y^{ab}$ gives the Weyl--CKY current
\begin{align}
\mathcal{J}_c=Q^d\mathcal{W}_{dabc}Y^{ab}+Y^{ab}\mathcal{D}_{abc}^{(u,\xi[Y])}.\label{cky-rs-current-2}
\end{align}
This is a compatibility identity. Equation \eqref{cky-rs-current-2} provides a compatibility condition between the CRS obstruction and the CKY geometry. The expression in terms of $D^{(u,\xi[Y])}_{abc}$ measures the failure of the CRS and CKY integrability conditions to decouple.

Taking the divergence of \eqref{cky-rs-current-2} using the conservation of $\mathcal J_a$ yields 
\begin{align}
\nabla^c\left(Q^d\mathcal{W}_{dabc}Y^{ab}+Y^{ab}\mathcal{D}_{abc}^{(u,\xi[Y])}\right)=0.\label{cky-rs-current-3}
\end{align}
This relation makes explicit how the conserved CKY--Cotton current is encoded in the CRS obstruction and the CKY data.

The derived relations suggest that the CKY and Ricci-soliton equations form a strongly constrained, overdetermined system. A complete analysis of their integrability, compatible with the CKY--RS structure, may yield further curvature restrictions and, under additional conditions, possible classification results. Such a systematic integrability analysis is beyond the scope of this work. When the background has a specialized geometry, integrability of the CKY--RS structure may simplify. This will be seen in Section \ref{sec-4} when we specialize to an Einstein background where simply arguments and presentations suffice.

\subsection{4--dimensional Lorentzian CKY--RS rigidity}

In 4--dimensional Lorentzian spacetimes, there is an immediate characterization result that connects the CKY--RS structure and the Kerr-NUT-(A)dS classification in \cite{Frolov1,Frolov2,Krtous1}. 

Let $(M,g_{ab})$ be a 4--dimensional Lorentzian spacetime, and let $Y_{ab}$ be CKY 2-form on $g_{ab}$. Let $\tilde g_{ab}=e^{2u}g_{ab}$ be a metric conformal to $g_{ab}$. Then, 
\begin{align}
\tilde Y_{ab}=e^{3u}Y_{ab}
\end{align}
is CKY 2-form on $\tilde g_{ab}$. It therefore follows that 
\begin{align}
d\tilde Y=e^{3u}(3du\wedge Y+dY).
\end{align}

Suppose $Y_{ab}$ is closed ($dY=0$) and non-degenerate ($Y\wedge Y\neq0$), and let $du\wedge Y=0$. Then $\tilde Y_{ab}$ is also closed and non-degenerate. 

Now, assume the CKY vector field associated to $Y_{ab}$ is a CKV. Then, the conformal metric compatible with the CKY--RS structure $(M,g_{ab},u,Y_{ab},\lambda)$ is Einstein. By the classification of Krtous \textit{et al.} \cite{Krtous1}, 4--dimensional Lorentzian Einstein metrics admitting a non-degenerate CCKY 2--form are locally in the Kerr-NUT-(A)dS class and these metrics have Weyl Petrov type D (or O in the conformally flat case). The conformal invariance of the Petrov type forces $g_{ab}$ to also Weyl Petrov type D (or O).

We therefore have the following corollary that emerges from the Kerr--NUT--(A)dS classification:
\begin{corollary}\label{th2}
Let $(M,g_{ab},u,Y_{ab},\lambda)$ be a 4--dimensional Lorentzian CKY--RS spacetime with a non-degenerate CCKY 2--form $Y_{ab}$. Let the CKY vector field $\xi^a$ be a CKV for $g_{ab}$, and suppose
\begin{align}
du\wedge Y=0.\label{compa1}
\end{align}
Then, $\tilde g_{ab}$ is locally Kerr--NUT--(A)dS, and the Weyl tensor of $g_{ab}$ is of Petrov type D or O.
\end{corollary}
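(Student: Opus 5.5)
The proof assembles three facts already established in this section. First, the soliton equation reduces to an Einstein equation for $\tilde g_{ab}$. Second, the rescaled form $\tilde Y_{ab}=e^{3u}Y_{ab}$ is a non-degenerate CCKY 2--form for $\tilde g_{ab}$. Third, the Krtouš \textit{et al.} classification and the conformal invariance of the Petrov type finish the argument.

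\emph{Step 1 (Einstein representative).} Since $\xi^a$ is a CKV, $\mathcal{L}_\xi g_{ab}=2\sigma g_{ab}$. We need $\mathcal{L}_\xi\tilde g_{ab}=2(\mathcal{L}_\xi u+\sigma)\tilde g_{ab}$, with the soliton field written in $\tilde g$-covariant form. The CRS equation then becomes $\tilde R_{ab}=c\,\tilde g_{ab}$ with $c$ given by \eqref{funda7}. For $n=4\ge3$, the contracted Bianchi identity (Schur's lemma) makes $c$ constant, so $\tilde g_{ab}$ is Einstein.

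\emph{Step 2 (transfer of the CCKY form).} Conformal covariance of the CKY equation with weight $3$ shows that $\tilde Y_{ab}=e^{3u}Y_{ab}$ solves \eqref{cky-eq} for $\tilde g_{ab}$. I would verify this by the standard computation: use $\tilde\nabla_c\tilde Y_{ab}=e^{3u}\bigl(\nabla_cY_{ab}+3u_cY_{ab}\bigr)+{}$Christoffel-difference terms, and check that the non-trace, non-totally-antisymmetric part cancels. Then $d\tilde Y=e^{3u}(3\,du\wedge Y+dY)=0$ by $dY=0$ and \eqref{compa1}, so $\tilde Y$ is closed. Non-degeneracy follows from $\tilde Y\wedge\tilde Y=e^{6u}Y\wedge Y\neq0$. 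More precisely, the principal (rank-$4$) condition is that $\tilde Y^a{}_b=\tilde g^{ac}\tilde Y_{cb}=e^{u}Y^a{}_b$ has functionally independent eigenvalues. This is the step I expect to need care: rescaling by $e^u$ could in principle spoil functional independence. I would argue via \eqref{compa1}. In four dimensions, with $Y$ non-degenerate, $du\wedge Y=0$ forces $du=0$, because wedging with a non-degenerate 2-form is injective on 1-forms. So $u$ is locally constant, and the eigenvalue structure is preserved trivially. If only $\tilde Y\wedge\tilde Y\ne0$ is needed, as the text preceding the statement suggests, this remark is not even required.

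\emph{Step 3 (classification and Petrov type).} $(M,\tilde g_{ab})$ is a 4--dimensional Lorentzian Einstein metric with a non-degenerate CCKY 2--form $\tilde Y_{ab}$. The classification of Krtouš \textit{et al.} \cite{Krtous1} then gives that $\tilde g_{ab}$ is locally Kerr--NUT--(A)dS, and in particular of Petrov type D or O. The Weyl tensor $\mathcal{W}^a{}_{bcd}$ is conformally invariant, so the principal null directions and their multiplicities coincide for $g_{ab}$ and $\tilde g_{ab}$. Hence $g_{ab}$ is also of type D or O, which proves Corollary \ref{th2}. All statements are local, on the open set where $Y\wedge Y\neq0$.
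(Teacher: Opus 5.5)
Your proposal is correct and follows the paper's route. The paper likewise obtains an Einstein representative from the CKV condition and Schur's lemma, transfers closedness and non-degeneracy to $\tilde Y=e^{3u}Y$ via $d\tilde Y=e^{3u}(3\,du\wedge Y+dY)$, invokes the Krtou\v{s} \emph{et al.} classification, and concludes with conformal invariance of the Petrov type. Your side remark that $du\wedge Y=0$ with non-degenerate $Y$ forces $du=0$ is exactly the argument of Theorem~\ref{th-4}, which the paper uses to observe that any structure satisfying this corollary is trivial.
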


\begin{remark}
The CKY 2-form $Y_{ab}$ denotes the closed CKY representative associated with the principal CKY structure whenever the closedness condition is invoked. We emphasize that its Hodge dual is also a CKY 2--form in four dimensions. However, the Hodge dual generally defines a different CKY--RS structure and consequently with a different CKY vector field.
\end{remark}

The CKY--RS structure therefore splits naturally in terms of the character of the soliton vector field. When the CKY vector field is a conformal Killing vector on the background, the compatibility condition forces the conformal representative to be Einstein. For a 4--dimensional Lorentzian metric, the additional condition of non-degeneracy and closedness of the CKY 2--form then places this Einstein representative of the conformal class within the Kerr--NUT--(A)dS family. Furthermore, it is worth emphasizing that the condition $du\wedge Y=0$ is not simply a decoration, but necessary for preserving closedness of $\tilde Y_{ab}$. This condition is precisely what allows the closedness and non-degeneracy of $Y_{ab}$ to be conformally invariant. On the other hand, when the CKY vector field is not a CKV, the conformal representative need not be Einstein, and the CKY and soliton equations are coupled as a genuinely nontrivial system. It is therefore natural to view the CKY--RS equations as an overdetermined system, with its consistency governed by integrability conditions inherited from both the conformal Ricci soliton and CKY equations.

Corollary \ref{th2} identifies the Kerr--NUT--(A)dS class as the natural local geometry that arises when the conformally related metric admits a nondegenerate principal CCKY 2--form induced from the CKY--RS structure. It is therefore natural to ask whether a nontrivial conformal deformation of a Kerr--NUT--(A)dS background can preserve this principal CKY structure. The following result gives a rigidity statement in this direction.
\begin{theorem}\label{th-4}
Let $g_{ab}$ belong locally to the 4--dimensional Lorentzian Kerr--NUT--(A)dS class and let $Y_{ab}$ denote its principal CCKY 2-form. Then there exists no non-trivial conformal deformation $\tilde g_{ab}=e^{2u}g_{ab}$ that defines a CKY--RS structure with soliton vector field $\xi[Y]$ and for which $ \tilde Y=e^{3u}Y $ is again a principal CCKY 2--form.
\end{theorem}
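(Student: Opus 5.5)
The plan is to show that requiring $\tilde Y=e^{3u}Y$ to be closed and non-degenerate forces $du=0$. From the conformal transformation rule stated before Corollary \ref{th2},
\begin{align*}
d\tilde Y=e^{3u}(3\,du\wedge Y+dY).
\end{align*}
Since $Y_{ab}$ is the principal CCKY 2--form of Kerr--NUT--(A)dS, we have $dY=0$. Closedness of $\tilde Y$ is therefore equivalent to the condition \eqref{compa1}, $du\wedge Y=0$. The first step is to show that for a non-degenerate 2--form in four dimensions, $du\wedge Y=0$ implies $du=0$. Wedge-multiplication by $Y$ is a linear map $\Lambda^1\to\Lambda^3$ between spaces of equal dimension $4$. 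At a point, non-degeneracy $Y\wedge Y\neq0$ lets us pick a frame in which $Y=e^1\wedge e^2+\mu\, e^3\wedge e^4$ with $\mu\neq0$, and there one checks directly that the map is injective. Concretely, if $\alpha=\alpha_ie^i$ then $\alpha\wedge Y$ contains $\alpha_3 e^3\wedge e^1\wedge e^2$, $\alpha_4 e^4\wedge e^1\wedge e^2$, $\mu\alpha_1 e^1\wedge e^3\wedge e^4$ and $\mu\alpha_2 e^2\wedge e^3\wedge e^4$, which are independent. Equivalently, one may note that $\alpha\wedge Y=0$ gives $\alpha\wedge Y\wedge Y=0$ only trivially, so the explicit frame computation is the cleaner route. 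Hence $du=0$ pointwise wherever $Y$ is non-degenerate, which for the principal form (with $Y\wedge Y\neq0$ on a dense open set, where $r$ and $y$ are the eigenvalue coordinates) is an open dense subset.

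The second step extends this to the whole connected domain. By smoothness of $u$, $du=0$ on a dense set gives $du\equiv0$, so $u$ is locally constant, and hence constant on each connected component. The transformation is then homothetic, and by the definition of CKY--RS the structure is trivial. The CKY--RS soliton equation plays no role beyond being consistent: with $u$ constant it reduces to the Einstein condition for Kerr--NUT--(A)dS with $\xi$ Killing, by \eqref{ccky-int1}. For completeness I would check that a trivial solution exists, but the statement only asks for non-existence of non-trivial ones.

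The main obstacle is the non-degeneracy bookkeeping. I must make sure that "principal" indeed gives $Y\wedge Y\neq0$ generically on Kerr--NUT--(A)dS. The two eigenvalues $r$ and $y$ are functionally independent, so the Pfaffian $\propto r y$ vanishes only on a measure-zero set, such as the equatorial plane $y=0$. On that set I would handle the degenerate points by the continuity argument above rather than pointwise. I would also take care that the orientation and Lorentzian frame normal form, with one eigenvalue spacelike and one timelike, still yields the injectivity computation.
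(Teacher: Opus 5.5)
Your proposal is correct and follows the paper's proof: closedness of $\tilde Y$ together with $dY=0$ gives $du\wedge Y=0$, and injectivity of $\beta\mapsto\beta\wedge Y$ for non-degenerate $Y$ in four dimensions then forces $du=0$. Your explicit frame check of that injectivity, and your density/continuity argument across the locus $ry=0$ where the principal form degenerates, are refinements the paper omits (it tacitly assumes $Y\wedge Y\neq0$ everywhere), but they do not change the route.
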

\begin{proof}
Since $\tilde Y_{ab}$ is closed,
\begin{align*}
0&=d\tilde Y\\
&=d(e^{3u}Y)\\
&=3e^{3u}du\wedge Y,
\end{align*}
where we have used $dY=0$. Hence $du\wedge Y=0$. Since $Y_{ab}$ is non-degenerate, and dimension is four, the map
\begin{align*} 
\beta\mapsto\beta\wedge Y,
\end{align*}
for a 1-form $\beta$, is an isomorphism. Hence, $\rm du=0$ and and $u$ is locally constant. Hence the CKY--RS structure is trivial.\qed
\end{proof}

Corollary \ref{th2} is a classification result under the condition $du\wedge Y=0$, which guarantees $\tilde g_{ab}$ admits a non-degenerate CCKY 2--form and hence belongs locally to the Kerr--NUT--(A)dS class. By Theorem \ref{th-4}, in 4--dimensional Lorentzian spacetime, this compatibility condition is rigid when $Y$ is non-degenerate and forces the CKY--RS structure to be trivial. Thus, Theorem \ref{th-4} shows that any CKY--RS structure under Corollary \ref{th2} is necessarily trivial.

In Theorem \ref{th-4} we have shown that, within the four-dimensional Kerr--NUT--(A)dS class of metric, only a trivial CKY--RS structure is admissible if one requires the principal CCKY structure of $Y_{ab}$ to be preserved under the conformal transformation. Thus, any non-trivial CKY–RS deformation, if it exists, must necessarily relax at least one of the conditions of closedness and non-degeneracy. We next turn to the Einstein-background case, without dimennsional fixing, where the CKY--RS equations are further simplified and lead to stronger classification results.

It must be emphasized that Theorem \ref{th-4} is not a new uniqueness theorem for Kerr--NUT--(A)dS family. Rather, it is a rigidity result for the CKY--RS coupling when the conformal transformation is to preserve the principal CKY structure.


\section{Einstein background rigidity and classification}\label{sec-4}


We now specialize to the case in which the background metric $g$ is Einstein. In this setting, the CKY integrability condition forces the associated vector field $\xi[Y]$ to be Killing, and the CKY--RS equations reduce to a tractable system for the conformal factor. All manifolds are assumed connected and have dimensions $n\geq3$.

\subsection{CKY--RS existence and spectral obstructions}

Let $g_{ab}$ be Einstein. Then, $R_{ab}=\Lambda g_{ab}$ for some constant $\Lambda$. Therefore,
\begin{align*}
\nabla_{(a}\xi_{b)}=0,
\end{align*}
and $\xi$ is a KV. Hence, $\tilde R_{ab}=c\tilde g_{ab}$, with $c=\lambda-\xi(u)$. This is our CKY--RS compatibility relation \eqref{funda7} and its centrality to our structure will now be exploited.
\begin{definition}\label{cky-rs-flow}
Let $(M,g_{ab})$ be an Einstein manifold and $Y_{ab}$ a CKY 2--form on $(M,g_{ab})$ with associated vector field $\xi$. Define the set
\begin{align*}
\Sigma_{\xi}:=\{\alpha\in\mathbb{R}:\exists \psi>0, \rm d\psi\not\equiv0,\mathcal{L}_\xi\psi=\alpha\psi\}.
\end{align*}
Then any CKY--RS structure compatible with $Y$ produces a pair $(\alpha,\psi)\in \Sigma_{\xi}$, with $\alpha=c-\lambda$.
\end{definition}

From Definition \ref{cky-rs-flow}, if $\Sigma_{\xi}$ is empty, no CKY--RS structure compatible with $Y_{ab}$ can exist. This immediately leads to the following obstruction result.
\begin{theorem}.
Let $(M,g_{ab})$ be an Einstein manifold and $Y_{ab}$ a CKY 2--form on $(M,g_{ab})$ with associated vector field $\xi$. If a non-trivial CKY--RS structure compatible with $Y_{ab}$ exists with
\begin{align*}
\widetilde g=\phi^{-2}g,\qquad \phi>0,
\end{align*}
then there exists a constant $\alpha\in\mathbb R$ such that $\mathcal{L}_\xi\phi=\alpha\phi$. Consequently, if the $\mathcal{L}_{\xi}$ admits no positive non-constant eigenfunction for any real eigenvalue, then $g_{ab}$ admits no non-trivial CKY--RS structure compatible with $Y_{ab}$.
\end{theorem}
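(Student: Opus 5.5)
The argument rests on the compatibility relation \eqref{funda7} specialised to an Einstein background, rewritten in terms of $\phi$. Since $g_{ab}$ is Einstein, the integrability condition \eqref{ccky-int1} makes $\xi$ a Killing vector, so $\sigma=0$. The CKY--RS equation for $\tilde g_{ab}=e^{2u}g_{ab}$ then reduces to $\tilde R_{ab}=c\,\tilde g_{ab}$ with $c=\lambda-\mathcal{L}_\xi u$. By Schur's lemma ($n\geq3$, $M$ connected) $c$ is constant, and $\lambda$ is constant by definition. Hence $\mathcal{L}_\xi u=\lambda-c$ is constant on $M$.

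Next I translate this to $\phi$. Writing $\tilde g=\phi^{-2}g$ means $e^{2u}=\phi^{-2}$, i.e. $u=-\log\phi$ with $\phi>0$. Then
\begin{align*}
\mathcal{L}_\xi\phi=\mathcal{L}_\xi e^{-u}=-e^{-u}\,\mathcal{L}_\xi u=(c-\lambda)\,\phi .
\end{align*}
Setting $\alpha:=c-\lambda\in\mathbb{R}$ gives $\mathcal{L}_\xi\phi=\alpha\phi$, as in Definition \ref{cky-rs-flow}.

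Non-triviality means $u$ is not constant. Since $\phi=e^{-u}$, it follows that $d\phi\not\equiv0$, so $(\alpha,\phi)\in\Sigma_\xi$ and $\phi$ is a positive non-constant eigenfunction of $\mathcal{L}_\xi$. The second assertion is the contrapositive: if $\mathcal{L}_\xi$ has no positive non-constant eigenfunction for any real eigenvalue, then $\Sigma_\xi=\emptyset$, and no non-trivial compatible CKY--RS structure exists.

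The only step needing care is the constancy of $c$. I would justify it by the standard contracted-Bianchi argument for $\tilde g$: if $\tilde R_{ab}=c\,\tilde g_{ab}$ with $c$ a function, then $\tilde\nabla^a\tilde R_{ab}=\tfrac12\tilde\nabla_b\tilde R$ gives $(1-\tfrac n2)\,dc=0$, so $c$ is locally constant for $n\geq3$, hence constant on connected $M$. Everything else is a direct substitution.
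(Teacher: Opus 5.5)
Your proof is correct and follows the paper's own argument. On an Einstein background $\xi$ is Killing, so the soliton equation collapses to $\tilde R_{ab}=c\,\tilde g_{ab}$ with $c=\lambda-\xi(u)$, and $c$ is constant by Schur; rewriting in $\phi=e^{-u}$ gives $\mathcal{L}_\xi\phi=(c-\lambda)\phi$, and non-triviality supplies $d\phi\not\equiv0$. Your explicit Bianchi justification of the constancy of $c$ is a useful addition. Note that, like the paper, you invoke \eqref{ccky-int1}, which the paper introduces for closed CKY forms; it nevertheless holds for any CKY 2--form, because the $dY$ contribution to the contracted integrability condition is antisymmetric and drops out on symmetrization, so the Killing step is valid as stated.
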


Viewing Einstein CKY--RS through the dynamics of the conformal factor provides some insights into the structure of the metrics of the CKY--RS structure, using elementary arguments. Denote by $\Phi_t$ the flow generated by $\xi^a$, and fix $p=p_0\in M$. Along the curve $\gamma_p(t)=\Phi_t(p)$, the equation $\mathcal{L}_\xi\phi=\alpha\phi$ becomes
\begin{align*}
\frac{d}{dt}\phi(\gamma_{p_0}(t)) =\alpha\phi(\gamma_{p_0}(t)).
\end{align*}
This equation has the explicit solution
\begin{align*}
\psi(\Phi_t(p_0))=e^{\alpha t}\psi(p_0).
\end{align*}
with initial condition $\psi(\gamma_{p_0}(0))=\psi(p_0)$. Thus, $\psi$ is invariant when $\alpha=0$, grows exponentially when $\alpha>0$, and decays exponentially when $\alpha<0$.

If $(M,g_{ab})$ is compact and Riemannian, there is also an immediate consequence originating from $\mathcal{L}_\xi\phi=\alpha\psi$: $\xi^a$ is a KV since $g_{ab}$ is Einstein, and thus volume-preserving. Therefore
\begin{align*}
0&=\int_M\mathcal{L}_\xi\phi\mbox{dVol}_g\\
&=\alpha\int_M\phi\mbox{dVol}_g.
\end{align*}
Since $\phi>0$, we must have $\alpha=0$.

When the $\xi$-orbits are constrained, stronger statements can be presented, even if the compactness condition is relaxed. For example, if there is a periodic orbit with period $T>0$, i.e. $\Phi_T(p_0)=p_0$, then,
\begin{align*}
\phi(p_0)&=\phi(\Phi_T(p_0))\\
&=e^{\alpha T}\phi(p_0),
\end{align*}
and hence $\alpha=0$. 

The preceding analysis gives a necessary condition, through a CKY--RS--flow, for the existence of a non--trivial CKY--RS structure on an Einstein background. In particular, the conformal factor must be a positive (non-constant) eigenfunction of the operator $\mathcal{L}_{\xi}$. Periodic orbits, for example, forces $\alpha=0$. Hence the CKY--RS dynamics alone already provides obstruction to the existence of non-trivial CKY--RS structure for broad classes of Einstein backgrounds.

The remaining Einstein background equations impose an additional differential condition on the conformal factor. We next turn to these constraints and derive classification results.

\subsection{Einstein background geometry and classification}

We now examine the geometric constraints imposed by the Einstein background equations. Set $\phi=e^{-u}$, and let us define
\begin{align}
\mathcal{G}=\Delta u+(n-2)|\nabla u|^2+ce^{2u}.
\end{align}
Let 
\begin{align*}
u_{ab}-u_au_b=Kg_{ab}, 
\end{align*}
for some function $K$. Then, using the soliton equation together with the transformation law for the Ricci tensor we get
\begin{align}
R_{ab}&=(\mathcal{G}+(n-2)K)g_{ab},\quad K=\frac{\Lambda-\mathcal{G}}{n-2},\label{bec-2}\\
\phi_{ab}&=-K\phi g_{ab},\label{bec-5}
\end{align}
We refer to \eqref{bec-5} as the background Einstein condition (BEC). We recognize \eqref{bec-5} as the Obata's Hessian equation. The BEC admits the following first integral
\begin{align}
(n-1)|\nabla\phi|^2=-(\Lambda\phi^2+c).\label{first-int-1}
\end{align}

Suppose $d\phi=0$ (or equivalently $du=0$). Then, $K=0$. Also, $c=\lambda$ and $\mathcal{G}=\phi^{-2}\lambda$. Therefore, the background scalar curvature controls the nature of the soliton.
\begin{proposition}
Let $(M,g_{ab},u,Y_{ab},\lambda)$ be a Einstein CKY--RS structure, and let $\phi=e^{-u}$. If $d\phi=0$. Then the soliton is steady, shrinking, or expanding if $R=0, R>0$, and $R<0$, respectively.
\end{proposition}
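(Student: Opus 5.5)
When $d\phi=0$ the conformal change is homothetic, and I will reduce the soliton constant $\lambda$ to the sign of the background scalar curvature using the relations of this subsection.

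First, $d\phi=0$ means $du=0$. Then $u_a=0$ and $u_{ab}=0$, so $u_{ab}-u_au_b=Kg_{ab}$ holds with $K=0$. The soliton field $\xi$ is Killing on the Einstein background, so $\mathcal{L}_\xi u=\xi(u)=0$. The compatibility relation $c=\lambda-\xi(u)$ from \eqref{funda7} therefore gives $c=\lambda$. With $\Delta u=0$ and $|\nabla u|^2=0$, the definition of $\mathcal{G}$ reduces to
\begin{align*}
\mathcal{G}=ce^{2u}=\lambda\phi^{-2}.
\end{align*}

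Second, insert this into \eqref{bec-2}. With $K=0$ it reads
\begin{align*}
R_{ab}=\mathcal{G}g_{ab}=\lambda\phi^{-2}g_{ab}.
\end{align*}
Tracing gives $R=n\lambda\phi^{-2}$, and hence $\Lambda=\lambda\phi^{-2}$. Since $\phi=e^{-u}>0$, the factor $n\phi^{-2}$ is a positive constant. So $\operatorname{sign}R=\operatorname{sign}\lambda$, and $\lambda=0,>0,<0$ correspond to the steady, shrinking and expanding cases exactly when $R=0,>0,<0$.

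As an independent check I will verify the same identity directly. Under $\tilde g=e^{2u}g$ with $u$ constant, $\tilde R_{ab}=R_{ab}$. The soliton equation with $\xi$ Killing then reads $R_{ab}=\lambda\tilde g_{ab}=\lambda e^{2u}g_{ab}$, which is consistent with the above.

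The first integral \eqref{first-int-1} also gives $0=-(\Lambda\phi^2+c)$. Combined with $\Lambda=\lambda\phi^{-2}$ and $c=\lambda$, this yields $\lambda=-\lambda$, i.e. $\lambda=0$. This would force the steady case and make the $R>0$ and $R<0$ alternatives vacuous. I expect this sign convention in the first integral to be the main obstacle.

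I will not rely on \eqref{first-int-1} in this degenerate case. Instead I will base the proof only on the direct homothety computation, which is unambiguous and gives the stated trichotomy. If \eqref{first-int-1} is taken at face value, the statement still holds, but only trivially in the steady case.
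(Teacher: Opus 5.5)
Your argument is correct and matches the paper's own reasoning: with $du=0$ one has $K=0$, $c=\lambda$ and $\mathcal{G}=\lambda\phi^{-2}$, so \eqref{bec-2} gives $R=n\lambda\phi^{-2}$, and the sign of $R$ is the sign of $\lambda$. You were right to set \eqref{first-int-1} aside, but the cause is not a sign convention: that first integral is derived using $K=\Lambda/(n-1)$, which holds only on the $d\phi\neq0$ branch, whereas for $K=0$ the same computation gives $\Lambda\phi^2=(n-1)|\nabla\phi|^2+c$, i.e.\ $\Lambda\phi^2=\lambda$ here, exactly your homothety check, so there is no collapse to the steady case.
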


Fix an Einstein metric. Then, the above proposition restricts the class of backgound that admit a CKY--RS structure. The compact case in the Riemannian setting  is restricted to this constant $\phi$ branch. In particular,
\begin{proposition}
A CKY--RS structure on a compact Einstein manifold has a Ricci-flat background. Moreover, if $g_{ab}$ is Riemannian, the CKY-RS is trivial, i.e. $d\phi=0$.
\end{proposition}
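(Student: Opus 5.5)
The plan is to work entirely with the background Einstein condition \eqref{bec-5}, $\phi_{ab}=-K\phi g_{ab}$, together with its first integral \eqref{first-int-1}, and to feed in compactness through integration by parts, as in the volume-preservation argument given after the obstruction theorem. The case $d\phi\equiv0$ is the trivial structure. Here the Riemannian part of the claim holds automatically. The Ricci-flatness part does not follow from \eqref{bec-5}--\eqref{first-int-1} alone, since they only give $\Lambda\phi^2+c=0$, $K=0$, $\mathcal{G}=\Lambda$. So I read the Ricci-flat claim as concerning the branch in which $\phi$ is non-constant, and that is the case treated below.

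\emph{Step 1: identify $K$.} Differentiate \eqref{first-int-1} and insert \eqref{bec-5}:
\begin{align*}
2(n-1)\phi^b\phi_{ab}=-2\Lambda\phi\phi_a
\quad\Longrightarrow\quad
\big((n-1)K-\Lambda\big)\phi\,\phi_a=0 .
\end{align*}
Since $\phi>0$, this gives $K=\Lambda/(n-1)$ on the open set $U=\{d\phi\neq0\}$.

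\emph{Step 2: show $U$ is dense.} Suppose $d\phi$ vanished on a nonempty open set $V$. On $V$ we would have $K\phi=0$ from \eqref{bec-5}, hence $K=0$. Along any geodesic $\gamma$, \eqref{bec-5} reduces to the linear ODE $(\phi\circ\gamma)''=-K\,g(\dot\gamma,\dot\gamma)\,(\phi\circ\gamma)$. Geodesics leaving $V$ would therefore carry the constant data of $\phi$ into $U$, so a unique-continuation argument should force $d\phi\equiv0$ on the connected $M$. Once $U$ is dense, continuity of $K$ gives $K\equiv\Lambda/(n-1)$ on $M$.

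\emph{Step 3: Ricci-flatness.} Take the trace of \eqref{bec-5}: $\Delta\phi=-\frac{n}{n-1}\Lambda\phi$. Integrate over the compact $M$ and use the divergence theorem:
\begin{align*}
0=\int_M\Delta\phi\,\mbox{dVol}_g=-\frac{n\Lambda}{n-1}\int_M\phi\,\mbox{dVol}_g .
\end{align*}
Since $\phi>0$, this forces $\Lambda=0$, so the background is Ricci-flat. As a side remark, the argument after the obstruction theorem already gives $\alpha=0$, i.e.\ $c=\lambda$, but it is not needed here.

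\emph{Step 4: the Riemannian case.} With $\Lambda=0$ we get $K=0$ and $\phi_{ab}=0$, so $\phi$ is harmonic. Then $\int_M|\nabla\phi|^2\,\mbox{dVol}_g=-\int_M\phi\Delta\phi\,\mbox{dVol}_g=0$, and positive-definiteness gives $d\phi=0$. This contradicts non-constancy, so in the Riemannian case only the trivial branch survives. Alternatively, \eqref{first-int-1} makes $|\nabla\phi|^2=-c/(n-1)$ constant, and a maximum point of $\phi$ on the compact $M$ forces that constant to be $0$.

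\emph{Main obstacle.} I expect Step 2 to be the hardest part, namely controlling the critical set of $\phi$ so that $K=\Lambda/(n-1)$ holds globally and not just on $U$. I would first try the geodesic ODE and unique-continuation argument sketched above. If that becomes awkward in indefinite signature, a fallback is to integrate only over the open set $U$, using that $\phi$ is locally constant on the interior of $M\setminus U$. Finally, the Ricci-flat conclusion genuinely depends on being in the non-constant branch, and I would state that restriction explicitly.
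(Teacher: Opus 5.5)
\textbf{There is a genuine gap, and it cannot be closed: the proposition is false as stated.}

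Your Steps 3 and 4 are essentially the paper's own proof: trace the BEC with $K=\Lambda/(n-1)$, integrate, then use harmonicity in Riemannian signature. Your Steps 1--2 try to justify the input $K=\Lambda/(n-1)$, which the paper silently takes from Proposition~\ref{c-c}. Your remark that the trivial branch violates the Ricci-flat claim is correct.

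The gap is Step 1, because \eqref{first-int-1} is not a consequence of the BEC. Tracing $u_{ab}-u_au_b=Kg_{ab}$ and inserting it into $\Lambda=\mathcal{G}+(n-2)K$ gives the actual identity
\begin{align*}
(n-1)|\nabla\phi|^2=\Lambda\phi^2-2(n-1)K\phi^2-c.
\end{align*}
This coincides with \eqref{first-int-1} exactly when $(n-1)K=\Lambda$, so differentiating \eqref{first-int-1} presupposes the conclusion. Your own trivial-branch computation already exposes this: there \eqref{first-int-1} gives $c=-\Lambda\phi^2$, whereas directly $\tilde R_{ab}=R_{ab}=\Lambda\phi^2\tilde g_{ab}$, i.e. $c=\Lambda\phi^2$.

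Differentiating the correct identity gives only $(n-1)\nabla_a(K\phi)=\Lambda\phi_a$, i.e. $(n-1)K\phi=\Lambda\phi-\kappa$ for a constant $\kappa$. Equivalently, commute derivatives in $\phi_{ab}=-K\phi g_{ab}$ without treating $K$ as constant; that is exactly the step Proposition~\ref{c-c} gets wrong. Step 3 then yields $\Lambda\int_M\phi\,\mbox{dVol}_g=\kappa\,\mathrm{Vol}(M)$, which does not force $\Lambda=0$. Step 2 cannot help, since $K$ is genuinely non-constant.

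The statement fails even in the non-constant branch you restrict to. Take the round $S^n\subset\mathbb{R}^{n+1}$, so $\Lambda=n-1$.
\begin{itemize}
\item Let $\phi=\phi_0+\langle a,x\rangle$ with $\phi_0>|a|>0$ and $a\in\mathrm{span}\{e_3,\dots,e_{n+1}\}$. Then $\phi_{ab}=-(\phi-\phi_0)g_{ab}$, so $K=1-\phi_0/\phi$ and $\kappa=(n-1)\phi_0$.
\item The metric $\tilde g=\phi^{-2}g$ is again round, with $\tilde R_{ab}=(n-1)(\phi_0^2-|a|^2)\tilde g_{ab}$.
\item The pull-back $Y=\iota^*(dx^1\wedge dx^2)$ is a closed CKY 2-form with $\nabla_cY_{ab}=2g_{c[a}\xi_{b]}$, where $\xi$ is the rotation Killing field in the $x^1x^2$-plane. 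Hence $\mathcal{L}_\xi\phi=0$ and $\mathcal{L}_\xi\tilde g_{ab}=0$.
\end{itemize}
With $\lambda=c$ this is a CKY--RS structure with $d\phi\neq0$ on a compact Riemannian Einstein manifold with $\Lambda>0$. It contradicts both assertions of the proposition.

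Two things remain true.
\begin{itemize}
\item If $\Lambda=0$, then $K\phi$ is constant, and integrating the trace forces it to vanish. So $\phi_{ab}=0$, and in Riemannian signature $d\phi=0$.
\item In the compact Riemannian case, evaluate $\phi_{ab}=-\frac{\Lambda\phi-\kappa}{n-1}g_{ab}$ at a maximum and at a minimum of $\phi$. This shows that a non-trivial structure forces $\Lambda>0$. Obata's theorem applied to $\phi-\kappa/\Lambda$ then makes $g$ a round sphere, which is the opposite of what the proposition claims.
\end{itemize}
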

\begin{proof}
Take the trace of \eqref{bec-5} and integrate over $M$:
\begin{align}
\int_M\Delta\phi \mbox{dVol}_g=-\frac{n\Lambda}{(n-1)}\int_M\phi\mbox{dVol}_g.\label{bec-19} 
\end{align} 
Since $M$ is compact, the left hand side integrates to zero by stokes' theorem. Since $\phi>0$, $\Lambda=0$, and the background is Ricci-flat.

From the BEC, $\phi_{ab}=0$ which implies $\Delta\phi=0$. If $g_{ab}$ is Riemannian, since $M$ is compact, $\phi$ is constant.\qed
\end{proof}

On the other hand, if $d\phi\neq0$, the problem is more elaborate.  We now state and prove the following result.

\begin{proposition}\label{c-c}
Let $(M,g_{ab},u,Y_{ab},\lambda)$ be a Einstein CKY--RS structure, and let $\phi=e^{-u}$. If $d\phi\neq0$, then
\begin{align}
K=\frac{\Lambda}{(n-1)},\quad \mathcal{W}_{abc}^{\ \ \ d}\phi_d=0.\label{bec-5-2}
\end{align}
\end{proposition}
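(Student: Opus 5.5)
Starting from the background Einstein condition \eqref{bec-5}, $\phi_{ab}=-K\phi g_{ab}$ with $d\phi\neq0$, we use two standard consequences of a concircular Hessian equation. The first is the commutation (Ricci) identity for third derivatives of $\phi$. The second is the Einstein decomposition of the Riemann tensor into Weyl plus a constant-curvature part.

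\emph{Step 1 (constancy of $K$ relative to $\Lambda$).} Differentiate \eqref{bec-5} to get $\nabla_c\phi_{ab}=-\nabla_c(K\phi)\,g_{ab}$. Antisymmetrise in $c,a$ and apply the Ricci identity $\nabla_c\nabla_a\phi_b-\nabla_a\nabla_c\phi_b=-R_{cab}{}^{d}\phi_d$ (up to the paper's sign convention). This gives
\begin{align*}
R_{cab}{}^{d}\phi_d=\nabla_c(K\phi)\,g_{ab}-\nabla_a(K\phi)\,g_{cb}.
\end{align*}
Contracting over $c$ and $b$ yields $R_{a}{}^{d}\phi_d=-(n-1)\nabla_a(K\phi)$, up to sign. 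With $R_{ab}=\Lambda g_{ab}$, this reads $\Lambda\phi_a=-(n-1)(K\phi_a+\phi\,K_a)$ (sign to be fixed by the convention). Separately, the first integral \eqref{first-int-1} was derived assuming constant coefficients. Differentiating it and using \eqref{bec-5} gives $-2(n-1)K\phi\,\phi_a=-2\Lambda\phi\,\phi_a$. Since $d\phi\neq0$ on an open dense set and $\phi>0$, we conclude $K=\Lambda/(n-1)$ there, and by continuity everywhere. To avoid circularity I would instead argue directly from the contracted identity. Write it as $(\Lambda+(n-1)K)\phi_a=-(n-1)\phi K_a$, so $dK\wedge d\phi=0$ and $K$ is locally a function of $\phi$. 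Feed this back into the uncontracted identity and take a trace to pin $K$ to the constant $\Lambda/(n-1)$, matching the sign fixed by \eqref{bec-2}. Checking this step against the sign of \eqref{bec-2} is where I expect the main bookkeeping obstacle: all conventions for the curvature and for $\mathcal{G}$ must agree.

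\emph{Step 2 (Weyl alignment).} With $K$ now constant, the uncontracted identity becomes $R_{cab}{}^{d}\phi_d=K(\phi_c g_{ab}-\phi_a g_{cb})$. For an Einstein metric, $R_{cabd}=\mathcal{W}_{cabd}+\frac{\Lambda}{n-1}(g_{cb}g_{ad}-g_{ab}g_{cd})$, with sign matching the convention above; the Schouten tensor is pure trace, so only this constant-curvature piece survives. Contracting with $\phi^d$ gives
\begin{align*}
\mathcal{W}_{cab}{}^{d}\phi_d=\Big(K-\frac{\Lambda}{n-1}\Big)(\phi_c g_{ab}-\phi_a g_{cb}),
\end{align*}
which vanishes by Step 1. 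This yields \eqref{bec-5-2}. As a cross-check, the trace of this identity vanishes identically because $\mathcal{W}$ is trace-free and the trace of the right-hand side is proportional to $(n-1)(K-\Lambda/(n-1))\phi_a$. So the trace alone already forces $K=\Lambda/(n-1)$ wherever $d\phi\neq0$, which gives an independent confirmation of Step 1.
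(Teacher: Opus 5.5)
\textbf{There is a genuine gap in your Step 1, and no further argument can fill it.}

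You were right to keep the $\nabla K$ terms and to distrust \eqref{first-int-1}. Rewriting \eqref{bec-2} in terms of $\phi$ gives $c=\Lambda\phi^2-2(n-1)K\phi^2-(n-1)|\nabla\phi|^2$, so \eqref{first-int-1} is equivalent to $K=\Lambda/(n-1)$ and cannot be used to prove it.

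Your proposed repair, however, produces nothing new. You suggest feeding $dK\wedge d\phi=0$ into the uncontracted identity and taking a trace, but that trace \emph{is} the contracted identity. With consistent signs it reads $(n-1)\nabla_a(K\phi)=\Lambda\phi_a$, i.e.\ $d\bigl(\phi(K-\tfrac{\Lambda}{n-1})\bigr)=0$. All you can conclude is $K=\tfrac{\Lambda}{n-1}+\tfrac{k_0}{\phi}$ with $k_0$ an arbitrary constant. Your final ``cross-check'' is circular for the same reason: it starts from the identity derived with $K$ constant.

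The paper's proof has the same hole. It writes \eqref{reduced-0} as if $K$ were constant, although $K=(\Lambda-\mathcal G)/(n-2)$ is a priori only a function. In fact $k_0\neq0$ occurs, so the first claim is false without further hypotheses. Here is a counterexample on the unit sphere $S^n$, where $\Lambda=n-1$.
\begin{itemize}
\item Take $\phi=a+b\,x^{n+1}|_{S^n}$ with $a>|b|>0$, working away from the two poles.
\item Take $Y=d\eta^\flat$, where $\eta$ is the rotation Killing field of the $x^1x^2$-plane. On a constant-curvature space this $Y$ is a closed CKY form, and $\xi[Y]$ is a nonzero multiple of $\eta$. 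Hence $\xi(\phi)=0$ and $\alpha=0$.
\item $\tilde g=\phi^{-2}g$ is Einstein with $c=\lambda=(n-1)(a^2-b^2)$.
\item Nevertheless $\phi_{ab}=-(\phi-a)g_{ab}$, so $K=1-a/\phi\neq1=\Lambda/(n-1)$.
\end{itemize}

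What can be rescued is the branch $\alpha\neq0$. Differentiate $\xi(\phi)=\alpha\phi$ and contract with $\nabla\phi$; since $\xi$ is Killing this gives $|\nabla\phi|^2=-K\phi^2$. Substituting into the identity for $c$ above yields $(n-1)k_0\phi=-c$. Because $\phi$ is non-constant, $k_0=c=0$.

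\textbf{The Weyl-alignment half is correct,} and your trace observation is exactly what proves it once you stop assuming $K$ is constant. Insert the Einstein decomposition of the Riemann tensor into the uncontracted Ricci identity, keeping the $\nabla K$ terms. This gives $\mathcal W_{cab}{}^{d}\phi_d=v_c\,g_{ab}-v_a\,g_{cb}$, with $v=d(K\phi)-\tfrac{\Lambda}{n-1}\,d\phi$, up to a convention-dependent overall sign. Tracing over $c,b$ kills the left-hand side, since $\mathcal W$ is trace-free, and leaves $(n-1)v_a=0$. Hence $\mathcal W_{cab}{}^{d}\phi_d=0$ holds in general. The same computation determines $K$ only up to the constant $k_0$ above.
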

\begin{proof}
Commute derivatives on the Hessian BEC to get
\begin{align}
R_{abc}^{\ \ \ d}\phi_d=K(g_{ac}\phi_b-g_{bc}\phi_a).\label{reduced-0}
\end{align} 
Contacting over $c$ and $d$ gives
\begin{align}
R_a^{\ b}\phi_b=(n-1)K\phi_a.\label{reduced-1}
\end{align}
Substituting $R_{ab}=\Lambda g_{ab}$ in \eqref{reduced-1} gives
\begin{align}
[\Lambda-(n-1)K]\phi_a=0.\label{bec-6}
\end{align}
Since $d\phi\neq0$, it follows $\Lambda-(n-1)K=0$. 

Finally, since $R_{ab}=\Lambda g_{ab}$, using the decomposition of the Riemann tensor into its Weyl and Ricci parts we get
\begin{align}
R_{abc}^{\ \ \ d}\phi_d=\mathcal{W}_{abc}^{\ \ \ d}\phi_d+\frac{\Lambda}{(n-1)}(g_{ac}\phi_b-g_{bc}\phi_a).
\end{align}
Comparing this to \eqref{reduced-0} gives the Weyl-alignment condition $\mathcal{W}_{abc}^{\ \ \ d}\phi_d=0$.\hfill\qed
\end{proof}

Note that $K=\mathcal{G}$ by comparing $K$ in \eqref{bec-2} and \eqref{bec-5-2}. Since $K$ is constant by \eqref{bec-5-2}, $\mathcal{G}$ is constant. Proposition \ref{c-c} simply classifies the constraint imposed by the BEC.

We now prove the following classification result.
\begin{theorem}\label{th-17}
Let $(M,g_{ab},u,Y_{ab},\lambda)$ be a Einstein CKY--RS structure, and suppose $d\phi\neq0$. Set $\alpha=c-\lambda$. Then, either 
\begin{align}
\mathcal{L}_{\xi}\phi=0\quad \mbox{or}\quad c=0.
\end{align} 
\end{theorem}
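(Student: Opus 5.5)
We combine the eigenvalue relation from the spectral obstruction theorem with the first integral \eqref{first-int-1} of the Obata-type equation, and use the Killing property of $\xi$.

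\emph{Step 1: the eigenvalue relation.} By the obstruction theorem of the previous subsection, $\phi=e^{-u}$ satisfies $\mathcal{L}_\xi\phi=\alpha\phi$ for the constant $\alpha=c-\lambda$.

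\emph{Step 2: differentiate the first integral along $\xi$.} Since $g_{ab}$ is Einstein, $\xi$ is a Killing vector. Hence $\mathcal{L}_\xi g=0$, and so $\mathcal{L}_\xi$ commutes with $d$ and with raising indices. This gives
\begin{align*}
\mathcal{L}_\xi|\nabla\phi|^2=2\langle\nabla\phi,\nabla(\mathcal{L}_\xi\phi)\rangle=2\alpha|\nabla\phi|^2 .
\end{align*}
Applying $\mathcal{L}_\xi$ to $(n-1)|\nabla\phi|^2=-(\Lambda\phi^2+c)$, and using that $c$ is constant, we obtain
\begin{align*}
2\alpha(n-1)|\nabla\phi|^2=-2\alpha\Lambda\phi^2 .
\end{align*}
Substituting the first integral on the left gives $-2\alpha(\Lambda\phi^2+c)=-2\alpha\Lambda\phi^2$, that is,
\begin{align*}
\alpha c=0 .
\end{align*}

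\emph{Step 3: conclude.} If $c=0$, we are done. If $\alpha=0$, then $\mathcal{L}_\xi\phi=\alpha\phi=0$. This is exactly the claimed dichotomy.

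The only delicate point is justifying the first integral \eqref{first-int-1} with the constant written there as the same soliton constant $c$. I would verify this before relying on it. Using $\phi_{ab}=-K\phi g_{ab}$ with $K=\Lambda/(n-1)$ from Proposition \ref{c-c}, one finds
\begin{align*}
\nabla_a|\nabla\phi|^2=2\phi^b\phi_{ab}=-2K\phi\phi_a ,
\end{align*}
so $|\nabla\phi|^2+K\phi^2$ is constant. The fact that this constant equals $-c/(n-1)$ comes from the conformal Ricci transformation law, i.e.\ from $\mathcal{G}$ being constant and equal to $K$. If that identification were only up to an arbitrary constant $C$, Step 2 would give $\alpha C=0$ instead. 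One would then need to relate $C$ to $c$ through the trace of the Ricci transformation law, $\tilde R=n c$. Written in terms of $\phi$, this law reads
\begin{align*}
\phi^{-2}\bigl(R+2(n-1)\phi\Delta\phi-n(n-1)|\nabla\phi|^2\bigr)=nc ,
\end{align*}
where I have taken the standard formula for $\tilde g=\phi^{-2}g$ as given. Inserting $\Delta\phi=-nK\phi$ and $R=n\Lambda=n(n-1)K$ yields $(n-1)(K\phi^2-|\nabla\phi|^2)\cdot(\ldots)$. I would carry out this check to confirm that the constant is indeed proportional to $c$, possibly with a sign different from the one displayed. The sign does not affect the conclusion $\alpha c=0$.
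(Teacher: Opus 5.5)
Your argument is correct and is essentially the paper's. The paper differentiates $\mathcal{L}_\xi\phi=\alpha\phi$, contracts with $\nabla^a\phi$ (Killing antisymmetry plus the Hessian equation) to get $\alpha\bigl(|\nabla\phi|^2+\frac{\Lambda}{n-1}\phi^2\bigr)=0$, and only then invokes \eqref{first-int-1}; you Lie-differentiate \eqref{first-int-1} directly, which evaluates the same quantity $\xi^a\phi^b\phi_{ab}$ in two ways and gives $\alpha c=0$ at once.

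Your side check uses a wrong trace law: the $\phi^{-2}$ prefactor you wrote should not be there. For $\tilde g=\phi^{-2}g$ the correct law is $\tilde R=\phi^{2}R+2(n-1)\phi\Delta\phi-n(n-1)|\nabla\phi|^2$, and with $\Delta\phi=-nK\phi$, $R=n(n-1)K$, $\tilde R=nc$ it returns exactly \eqref{first-int-1}.

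The delicate point lies upstream. Proposition~\ref{c-c} drops $\nabla K$ when commuting derivatives, so in general one only has $\phi_{ab}=-\bigl(\frac{\Lambda}{n-1}\phi+b\bigr)g_{ab}$ with $b$ a constant that need not vanish (e.g.\ $\phi=\beta+\cos r$, $\beta>1$, on the unit round sphere). This adds $-2(n-1)b\phi$ to the right side of \eqref{first-int-1}. Your Lie-derivative step still closes the argument: it then gives $\alpha\bigl((n-1)b\phi+c\bigr)=0$, so $\alpha\neq0$ forces $b=c=0$, since $\phi$ is non-constant.
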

\begin{proof}
The eigenvalue/compatibility equation \eqref{funda7}
\begin{align}
\mathcal{L}_{\xi}\phi=\alpha\phi,\label{funda-new-z}
\end{align}
with $\alpha=c-\lambda$. Differentiating \eqref{funda-new-z} and using the BEC we get
\begin{align}
(\nabla_a\xi_b)\phi^b=\alpha\phi_a+\frac{\Lambda}{(n-1)}\phi\xi_a.\label{funda-new-1}
\end{align}
Since $\xi_a$ is a KV, $\nabla_a\xi_b=\nabla_{[a}\xi_{b]}$. Contracting \eqref{funda-new-1} with $\phi^a$ gives
\begin{align}
\alpha|\nabla\phi|^2+\frac{\Lambda}{(n-1)}\phi\mathcal{L}_{\xi}\phi=0.\label{funda-new-z1}
\end{align}
Using \eqref{funda-new-z} gives
\begin{align}
\alpha\left[|\nabla\phi|^2+\frac{\Lambda}{(n-1)}\phi^2\right]=0.
\end{align}
If $\alpha=0$, then $\mathcal{L}_{\xi}\phi=0$. If $\alpha\neq0$, then 
\begin{align*}
(n-1)|\nabla\phi|^2+\Lambda\phi^2=0.
\end{align*}
and the first integral \eqref{first-int-1} consequently gives $c=0$.\hfill\qed
\end{proof}

The following corollary immediately follows:
\begin{corollary}\label{cor-x}
Under the assumptions of Theorem~\ref{th-17}, suppose that $\alpha\neq0$. Then $c=0$ and
\begin{equation}\label{restr}
(n-1)|\nabla\phi|^2=-\Lambda\phi^2.
\end{equation}
Consequently, the signature of $g_{ab}$ imposes the following scalar curvature constraints:
\begin{enumerate}
\item If $g$ is Riemannian, then necessarily $\Lambda<0$. 
\item If $g$ is Lorentzian, then the causal character of $d\phi$ is determined by the sign of $\Lambda$:
\begin{align*}
\begin{cases}
\Lambda>0 &\Longrightarrow d\phi \text{ is timelike},\\
\Lambda=0 &\Longrightarrow d\phi \text{ is null},\\
\Lambda<0 &\Longrightarrow d\phi \text{ is spacelike}.
\end{cases}
\end{align*}
\end{enumerate}
\end{corollary}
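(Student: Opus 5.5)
The corollary is essentially a repackaging of the last step in the proof of Theorem~\ref{th-17}, followed by a sign analysis. I would take the assumption $\alpha\neq0$ and return to the identity obtained there by contracting \eqref{funda-new-1} with $\phi^a$. In that contraction the term $(\nabla_a\xi_b)\phi^a\phi^b$ vanishes because $\xi$ is Killing (Einstein background), so $\nabla_a\xi_b$ is antisymmetric. Substituting $\mathcal{L}_\xi\phi=\alpha\phi$ then gives
\begin{align*}
\alpha\left[(n-1)|\nabla\phi|^2+\Lambda\phi^2\right]=0 .
\end{align*}
Dividing by $\alpha\neq0$ yields \eqref{restr} directly. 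Comparing with the first integral \eqref{first-int-1}, namely $(n-1)|\nabla\phi|^2=-(\Lambda\phi^2+c)$, forces $c=0$. I would remark briefly that \eqref{first-int-1} follows from the BEC \eqref{bec-5} with $K=\Lambda/(n-1)$ from Proposition~\ref{c-c}: differentiating $|\nabla\phi|^2$ gives $\nabla_a|\nabla\phi|^2=-2K\phi\phi_a$, so $(n-1)|\nabla\phi|^2+\Lambda\phi^2$ is constant. The paper identifies this constant with $-c$, and I would take that identification as given.

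The signature statements then come from reading \eqref{restr} as a sign condition on $|\nabla\phi|^2=g^{ab}\phi_a\phi_b$, using $\phi>0$ and $n\ge3$.

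In the Riemannian case, $d\phi\neq0$ means $|\nabla\phi|^2>0$ at least on the open set where $d\phi$ does not vanish. By \eqref{restr}, $-\Lambda\phi^2>0$ there, and since $\phi^2>0$ this forces $\Lambda<0$. This is legitimate because $\Lambda$ is a constant.

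In the Lorentzian case, I would adopt the signature convention in which timelike covectors have $|\nabla\phi|^2<0$. This is the convention consistent with the listed implications, so I would fix it explicitly in the proof. Then:
\begin{align*}
\Lambda>0 &\Longrightarrow |\nabla\phi|^2<0 \ \text{(timelike)},\\
\Lambda=0 &\Longrightarrow |\nabla\phi|^2=0 \ \text{with } d\phi\neq0 \ \text{(null)},\\
\Lambda<0 &\Longrightarrow |\nabla\phi|^2>0 \ \text{(spacelike)}.
\end{align*}

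There is no real obstacle here. The only points needing care are, first, stating the signature convention, since with the opposite convention the timelike and spacelike cases swap, and second, noting that ``$d\phi\neq0$'' should be read pointwise on the open set where the gradient is nonzero. On that set the causal character holds, and the sign of the constant $\Lambda$ is determined by any single such point.
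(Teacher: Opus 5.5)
Your proposal is correct and follows the paper's route: the paper reads the corollary off the last step of the proof of Theorem~\ref{th-17}, namely contracting \eqref{funda-new-1} with $\phi^a$, dropping $(\nabla_a\xi_b)\phi^a\phi^b$ by antisymmetry, dividing by $\alpha$, and comparing with \eqref{first-int-1} to get $c=0$; your sign analysis, with the mostly-plus convention made explicit, supplies the case list the paper leaves unargued. One caveat comes from the paper rather than from you: the proof of Proposition~\ref{c-c} discards the $\nabla K$ terms when commuting derivatives, so in general the BEC only gives $(n-1)\nabla_a(K\phi)=\Lambda\phi_a$; however, for $\alpha\neq0$ your contraction yields $|\nabla\phi|^2=-K\phi^2$ without assuming $K$ constant, differentiating this and comparing with $\nabla_a|\nabla\phi|^2=-2K\phi\phi_a$ forces $K=\Lambda/(n-1)$ anyway, and the trace of the conformal Ricci transformation law then gives \eqref{first-int-1} with constant exactly $-c$, so the corollary stands.
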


In order to proceed with further classification, we note that the case $d\phi\neq0$ case can naturally be split into two branches: $K=0$ branch ($g_{ab}$ is Ricci-flat) and $K\neq0$ branch. For $K=0$, $d\phi$ is a parallel one form, following from the BEC. This gives the following classification result:
\begin{theorem}\label{th-100}
Let $(M,g_{ab},u,Y_{ab},\lambda)$ be an Einstein CKY--RS structure. Let $\phi=e^{-u}$ with $d\phi\neq0$, and suppose that $K=0$. Then, $d\phi$ is a non--zero parallel one-form. If $d\phi$ is non--null, then, locally, there exists an affine coordinate $\tau$ such that $g_{ab}$ splits as a product
\begin{align*}
g&=\epsilon\,d\tau^2+h_{AB}(x)\,dx^A dx^B, \quad \epsilon=|\nabla\tau|^2=\pm1,\\
c&=-\epsilon(n-1)c_2^2.
\end{align*}
In particular, $h_{AB}$ is Ricci--flat. If $d\phi$ is null, then $g_{ab}$ is a Lorentzian metric which belongs locally to the Brinkmann class.
\end{theorem}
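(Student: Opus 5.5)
The argument rests on the background Einstein condition \eqref{bec-5} together with Proposition \ref{c-c}. With $K=0$, \eqref{bec-5} gives $\nabla_a\nabla_b\phi=0$, so $\phi_a=\nabla_a\phi$ is a parallel one-form, and it is non-zero because $d\phi\neq0$. Proposition \ref{c-c} gives $K=\Lambda/(n-1)$, so $K=0$ forces $\Lambda=0$; the background is Ricci-flat. Since $\phi_a$ is parallel, $|\nabla\phi|^2$ is constant. Call this constant $(n-1)c_2^2\epsilon$, or simply write $|\nabla\phi|^2=\epsilon\,\kappa$ with $\kappa>0$ in the non-null case. Then the first integral \eqref{first-int-1} with $\Lambda=0$ reads $(n-1)|\nabla\phi|^2=-c$, which fixes $c$ in terms of the constant norm. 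This is where the relation $c=-\epsilon(n-1)c_2^2$ comes from: write $\phi=c_2\tau+c_1$ with $\tau$ an affine coordinate of unit norm, so that $|\nabla\phi|^2=\epsilon c_2^2$.

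\emph{Non-null case.} Set $\tau=\phi/\sqrt{|\,|\nabla\phi|^2|}$, up to an additive constant, so that $\nabla\tau$ is parallel and $|\nabla\tau|^2=\epsilon=\pm1$. A parallel vector field $T^a=\nabla^a\tau$ is in particular Killing and hypersurface-orthogonal, and it is closed because it is a gradient. Its orthogonal distribution is integrable, with leaves the level sets $\{\tau=\mathrm{const}\}$, and it is parallel. By the local de Rham decomposition (or directly by choosing coordinates $(\tau,x^A)$ adapted to $T$, with $x^A$ constant along the flow of $T$), the metric takes the form $g=\epsilon\,d\tau^2+h_{AB}(x)dx^Adx^B$. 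Concretely, $g(T,\partial_A)=0$ and $\mathcal{L}_Tg=0$ give $\partial_\tau h_{AB}=0$. For a product metric, the Ricci tensor splits: $R_{\tau\tau}=0$ and $R_{AB}=R[h]_{AB}$. Hence Ricci-flatness of $g$ gives Ricci-flatness of $h$. Writing $\phi=c_2\tau+c_1$ and substituting into \eqref{first-int-1} yields $c=-\epsilon(n-1)c_2^2$.

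\emph{Null case.} Here $|\nabla\phi|^2=0$ with $d\phi\neq0$. A nonzero null vector cannot exist in Riemannian signature, and the classification is intended for the Lorentzian case, so $g$ is Lorentzian. A Lorentzian manifold admitting a nonzero parallel null vector field $\ell^a=\nabla^a\phi$ is, by definition and by the standard local coordinate construction, a Brinkmann (pp-wave-type) metric. One takes $v=\phi$ as a coordinate, $u$ an affine parameter along $\ell$, and transverse coordinates on the leaves of $\ell^\perp$, arriving at $g=2\,dv\,du+H(v,x)dv^2+2W_A(v,x)dx^Adv+h_{AB}(v,x)dx^Adx^B$. In this case \eqref{first-int-1} also gives $c=0$, consistent with Corollary \ref{cor-x}.

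\emph{Main obstacle.} The only delicate step is the local splitting in the non-null case, namely justifying that $h$ is independent of $\tau$. I expect it to follow cleanly from $T$ being parallel, since a parallel vector field is Killing and also gradient, which is exactly what is needed to get $\partial_\tau h_{AB}=0$ in adapted coordinates. Everything else is direct substitution into \eqref{bec-5}, \eqref{first-int-1} and Proposition \ref{c-c}.
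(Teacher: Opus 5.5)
Your proposal is correct and follows the paper's proof essentially step for step: $K=0$ in \eqref{bec-5} makes $d\phi$ parallel, Proposition~\ref{c-c} forces $\Lambda=0$, the constant norm $|\nabla\phi|^2=\epsilon c_2^2$ with $\phi=c_1+c_2\tau$ gives the local product splitting with Ricci-flat $h_{AB}$, the first integral \eqref{first-int-1} gives $c=-\epsilon(n-1)c_2^2$, and a parallel null gradient gives a Brinkmann metric; your adapted-coordinate argument for $\partial_\tau h_{AB}=0$ and your remark that $c=0$ in the null case add detail the paper omits. Two small points: the early normalization ``$(n-1)c_2^2\epsilon$'' would give the wrong $c$, but you correctly switch to $|\nabla\phi|^2=\epsilon c_2^2$; and, as in the paper, Lorentzian signature in the null case is assumed rather than derived, since a parallel null gradient can also occur in other indefinite signatures.
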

\begin{proof}
Since $K=0$, the BEC gives $\phi_{ab}=0$, and therefore $d\phi$ is a non--zero parallel 1--form. Since for a non--trivial structure $K=\Lambda/(n-1)$, $K=0$ gives $\Lambda=0$ and therefore $g_{ab}$ is Ricci--flat. Suppose $d\phi$ non-null. Since $d\phi$ is a parallel, it has constant norm.  Define
\begin{align*}
|\nabla\phi|^2=\epsilon c_2^2,\quad \epsilon=\pm1, \quad c_2>0.
\end{align*}
Introduce a coordinate $\tau$ such that 
\begin{align*}
d\phi=c_2d\tau.
\end{align*}
Thus, we have
\begin{align*}
\phi&=c_1+c_2\tau,\quad |\nabla\tau|^2=\epsilon,\quad\nabla_a\nabla_b\tau=0.
\end{align*}
for constants $c_j$. Hence, $n_a=\nabla_a\tau$ is a parallel unit vector field and its orthogonal distribution $n^{\perp}$. Therefore locally, $g_{ab}$ splits as
\begin{align*}
g=\epsilon\,d\tau^2+h_{AB}(x)\,dx^A dx^B, \quad \epsilon=|\nabla\tau|^2=\pm1.
\end{align*}
Since $g_{ab}$ is Ricci--flat, the transverse metric $h_{AB}$ is also Ricci--flat.

Finally, since $K=0$, $\mathcal{G}=0$. Hence, $|\nabla\phi|^2=c_2^2|\nabla\tau|^2$. Substituting in the first integral gives
\begin{align*}
c=-\epsilon(n-1)c_2^2,
\end{align*}
If $d\phi$ is null, then $d\phi$ is a non--zero parallel null form, and $g_{ab}$ is Lorentzian. Hence, $g_{ab}$ is locally of the Brinkmann class \cite{Brinkmann1}.\hfill\qed
\end{proof}

\begin{corollary}
Let $(M,g_{ab},u,Y_{ab},\lambda)$ be an Einstein CKY--RS structure. Let $\phi=e^{-u}$ with $d\phi\neq0$, and suppose that $K=0$. Then, $d\phi$ is parallel. If $d\phi$ is non--null, then $c\neq0$ and conseqently $\alpha=0$.
\end{corollary}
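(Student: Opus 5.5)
The plan is to combine Theorem \ref{th-100} with Theorem \ref{th-17}. Since $K=0$, the hypotheses of Theorem \ref{th-100} hold. That theorem already gives that $d\phi$ is parallel, because the BEC \eqref{bec-5} reduces to $\phi_{ab}=0$. It also gives that $\Lambda=(n-1)K=0$ by Proposition \ref{c-c}, so $g_{ab}$ is Ricci--flat. This proves the first claim directly.

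For the non--null case, I would use the first integral \eqref{first-int-1} with $\Lambda=0$, which reads
\begin{align*}
(n-1)|\nabla\phi|^2=-c .
\end{align*}
Because $d\phi$ is parallel, $|\nabla\phi|^2$ is constant, and non--nullness means this constant is nonzero; in the notation of Theorem \ref{th-100}, $|\nabla\phi|^2=\epsilon c_2^2$ with $c_2>0$. Hence
\begin{align*}
c=-\epsilon(n-1)c_2^2\neq0 .
\end{align*}
This is exactly the value of $c$ recorded in Theorem \ref{th-100}.

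Next I would apply Theorem \ref{th-17}, which asserts the dichotomy $\mathcal{L}_\xi\phi=0$ or $c=0$. The dichotomy comes from the identity
\begin{align*}
\alpha\left[|\nabla\phi|^2+\frac{\Lambda}{n-1}\phi^2\right]=0 ,
\end{align*}
which for $\Lambda=0$ becomes $\alpha|\nabla\phi|^2=0$. Since $|\nabla\phi|^2\neq0$, this gives $\alpha=0$ directly. Equivalently, Corollary \ref{cor-x} says that $\alpha\neq0$ would force $c=0$, contradicting $c\neq0$. So in either formulation $\alpha=0$, that is, $\mathcal{L}_\xi\phi=0$.

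The only delicate point is to use the form $\alpha|\nabla\phi|^2=0$ rather than merely quoting ``$c=0$''. One has to check that the contraction step in the proof of Theorem \ref{th-17} is valid. The term $(\nabla_a\xi_b)\phi^a\phi^b$ vanishes because $\xi$ is Killing on the Einstein background. The term $\Lambda\phi\xi_a\phi^a$ vanishes because $\Lambda=0$. Once these two checks are made, the argument is immediate. In the null case $|\nabla\phi|^2=0$ forces $c=0$, so no conclusion about $\alpha$ follows, which explains why the statement is restricted to non--null $d\phi$.
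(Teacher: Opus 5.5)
Your proposal is correct and matches the paper's implicit argument. Theorem~\ref{th-100} gives that $d\phi$ is parallel and that $c=-\epsilon(n-1)c_2^2\neq0$ when $d\phi$ is non--null, and the dichotomy of Theorem~\ref{th-17} then forces $\alpha=0$. Your direct form $\alpha|\nabla\phi|^2=0$ is just the $K=\Lambda=0$ specialization of the contraction step in the proof of Theorem~\ref{th-17}, so it is the same argument rather than a different route.
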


This result is already showing that the $K=0$ branch is very rigid. In particular, the $K=0$ branch does not realize a non--trivial CKY--RS structure unless the conformal factor is the zero eigenfunction of the $\xi$-flow.

On an Einstein background with $d\phi\neq0$, the BEC has a natural branching in terms of $K$. The $K=0$ branch is characterized by a parallel gradient and the background metric $g_{ab}$ either splits locally as a product  if $d\phi$ is non--null, or $g_{ab}$ is a Brinkmann geometry if $d\phi$ is null. The $K\neq0$ branch, on the other hand, has $\Lambda\neq0$ and satisfies the Weyl-alignment condition $C^d_{\ abc}\phi_d=0$. Together with Theorem \ref{th-17}, these results provide the basic rigidity and classification of non-trivial Einstein CKY--RS structures.


\section{Explicit CKY--RS constructions}\label{sec-5}


We show how certain Lorentzian geometries realize the CKY--RS structures. We focus on geometries with static spherical (circular) symmetry.

\subsection{Spherically symmetric CKY--RS structure} 
We establish a general result for spherically symmetric CCKY 2--forms in spherically symmetric geometries.
\begin{proposition}\label{prop-a}
Let $M=M^2\times \mathbb{S}^2$ be the spherically symmetric product
\begin{align}\label{ss-product}
ds^2=h_{AB}(x)dx^Adx^B+r^2(x)d\Omega^2,\quad dr\neq0.
\end{align}
where $d\Omega^2$ is the canonical metric on the unit sphere and $\epsilon_{AB}$ is the volume form on $(M^2,h_{AB})$. Within the spherically symmetric class, every closed CKY 2--forms is, up to an overall normalization,
\begin{align}
Y=r\epsilon_{AB},\label{unique-ccky}
\end{align}
and its associated vector is unique up, and is precisely the Kodama vector field
\begin{align}
\xi^A=-\epsilon^{AB}\nabla_B r,\quad \xi^i=0.\label{kodoma}
\end{align}
\end{proposition}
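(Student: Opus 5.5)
Within the spherically symmetric class, a 2--form invariant under $SO(3)$ on $M^2\times\mathbb{S}^2$ can only be a combination $Y=f(x)\,\epsilon_{AB}+k(x)\,r^2\omega_{ij}$, where $\omega$ is the area form of the unit sphere. The mixed $dx^A\wedge dx^i$ components must vanish, because $\mathbb{S}^2$ carries no invariant 1--form. The plan is to insert this ansatz into the CKY equation \eqref{cky-eq} with $z=0$. We then solve the resulting component equations and read off $\xi$.

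\textbf{Ansatz and closedness.} Closedness reads
\[
dY=df\wedge\epsilon_{AB}+d(kr^2)\wedge\omega=0 .
\]
The first term is a 3--form on $M^2$ and vanishes identically. The second term forces $kr^2$ to be constant.

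\textbf{Components of $\nabla Y$.} I will compute $\nabla_cY_{ab}$ using the warped-product Christoffel symbols:
\[
\Gamma^A_{ij}=-r\,r^A\,\hat g_{ij},\qquad \Gamma^i_{Aj}=\frac{r_A}{r}\,\delta^i_j .
\]
The CKY equation with $z=0$ says $\nabla_cY_{ab}=2g_{c[a}\xi_{b]}$. I then go through the index types in turn.

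\emph{(a) All indices on $M^2$.} In two dimensions $\nabla_C(f\epsilon_{AB})=f_C\,\epsilon_{AB}$. Matching this with $2h_{C[A}\xi_{B]}$ gives $\xi_B=-\epsilon_B{}^{C}f_C$, up to sign conventions.

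\emph{(b) The $(i,A,j)$ components.} These come only from the connection terms. Here $\nabla_iY_{Aj}$ picks up $-\Gamma^B_{iA}$ (zero) and $-\Gamma^m_{ij}$-type pieces. Concretely, the $f\epsilon$ part gives
\[
\nabla_iY_{Aj}=-\Gamma^B_{ij}Y_{AB}=r\,\hat g_{ij}\,r^B f\epsilon_{AB} .
\]
The $kr^2\omega$ part contributes $-\Gamma^m_{iA}Y_{mj}=-\tfrac{r_A}{r}\,kr^2\omega_{ij}$. The CKY right-hand side is $g_{ij}\xi_A=r^2\hat g_{ij}\xi_A$. The antisymmetric $\omega_{ij}$ piece must vanish, which forces $k\,r_A=0$. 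Since $dr\neq0$, this gives $k=0$. This is where the hypothesis $dr\neq0$ is used and the sphere part is killed. The symmetric piece then gives
\[
\xi_A=\frac{f}{r}\,\epsilon_{AB}\,r^B .
\]

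\emph{(c) Comparing (a) and (b).} Equating the two expressions for $\xi$, I get $\epsilon^{CB}f_C\propto(f/r)\,\epsilon^{CB}r_C$. Hence $df=(f/r)\,dr$, which gives $f=\text{const}\cdot r$. This yields \eqref{unique-ccky}. The remaining components, such as $(A,i,j)$ and $(i,j,k)$, should then hold automatically, and I will check this directly.

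\textbf{Associated vector.} With $Y=r\epsilon$, I compute $\xi_a=\frac{1}{n-1}\nabla^bY_{ba}$ with $n=4$. The divergence has two sources: the $M^2$ divergence, which gives $\epsilon_{BA}r^B$, and the warping terms $\Gamma^i_{iB}=2r_B/r$, which add $2\epsilon_{BA}r^B$. In total $\xi_A=\epsilon_{BA}r^B=-\epsilon_A{}^{B}r_B$, and $\xi^i=0$. This is the Kodama vector \eqref{kodoma}. Uniqueness of $\xi$ up to scale follows from the uniqueness of $Y$.

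\textbf{Main obstacle.} I expect the main difficulty to be sign and index bookkeeping in step (b). In particular I must confirm that the $\omega$ part cannot be a closed CKY for nonconstant $r$. I will first check this on flat space in polar coordinates as a sanity test. I will also note that $k r^2=\text{const}$ combined with $k\,dr=0$ already forces $k=0$.
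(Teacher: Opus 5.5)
Your proposal is correct and is essentially the paper's argument: the same invariant ansatz $Q_1\epsilon_h+Q_2\omega_{\mathbb{S}^2}$; removal of the sphere part through a mixed component of the CKY equation using $dr\neq0$ (the paper uses $\nabla_AY_{ij}$ together with $dQ_2=0$, you use the $\omega_{ij}$ part of $\nabla_iY_{Aj}$); then $\nabla_A(f/r)=0$ from matching two determinations of $\xi_A$ (the paper's $M^2$ equation, with $\xi$ taken as the full four-dimensional divergence, carries the same warping information as your symmetric part of $\nabla_iY_{Aj}$); and finally the divergence, which gives the Kodama vector. One sign needs fixing: $2g_{i[A}\xi_{j]}=-g_{ij}\xi_A$, so step (b) actually gives $\xi_A=-(f/r)\epsilon_{AB}r^B$, and only with this sign does comparison with (a), $\xi_A=-\epsilon_{AB}f^B$, yield $df=(f/r)\,dr$; with the sign as you wrote it you would get $f\propto r^{-1}$, so the coefficient hidden in the ``$\propto$'' of step (c) is exactly what determines the answer.
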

\begin{proof}
A generic CKY 2--form on this product is
\begin{align}
Y=Q_1(x)\epsilon_h+Q_2(x)\omega_{\mathbb{S}^2}.\label{gen-sph-sym}
\end{align}
where $\omega_{\mathbb{S}^2}$ denotes the volume form on the unit sphere. Since $Y$ is closed $dQ_2=0$. From the mixed CKY equation $\nabla_AY_{ij}=2g_{A[i}\xi_{j]}$, $Q_2=0$ for $dr\neq0$. From the $M^2$--projected CKY equation $\nabla_AY_{BC}=2g_{A[B}\xi_{C]}$, we obtain
\begin{align}
\nabla_A(r^{-1}Q_1)=0,
\end{align}
and hence, $Q_1=Q_{1(0)}r$. Therefore, up to an overall normalization, we have \eqref{unique-ccky}. Consequently, the divergence of \eqref{unique-ccky} gives \eqref{kodoma}, which is precisely the Kodoma vector field.\hfill\qed
\end{proof}

The relation between the Kodama vector and the associated vector of a CCKY 2--form was recently established by Kinoshita in \cite{Kinoshita1}, who showed that the origin of the Kodama vector field can be understood in terms of an associated CKY 2--form and, in particular, that the standard warped-product spacetimes admitting a Kodama vector possess a CCKY 2--form. As a complementary result, we strengthen this observation within the spherically symmetric class of CCKY 2--forms by showing that the CCKY 2--form \eqref{unique-ccky} is unique up to normalization. Consequently, its associated vector is  precisely uniquely the Kodama vector.

Under a conformal transformation in general spherical symmetry, a non-trivial transformation is only realizable if closedness is not transported to the conformal metric. 
\begin{proposition}\label{xa}
Let $M$ be the spherically symmetric product \eqref{ss-product}. Let $Y_{ab}$ be a spherically symmetric CCKY 2--form and let $\tilde g=e^{2u}g$ with $u$ being spherically symmetric. If $Y_{ab}$ is non--degenerate, then $dr=0$. Moreover, if closedness of $Y_{ab}$ is preserved under the transformation, then $du=0$ and hence the conformal factor is constant.
\end{proposition}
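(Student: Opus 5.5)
The plan is to work with the general spherically symmetric 2--form \eqref{gen-sph-sym},
\begin{align*}
Y=Q_1(x)\epsilon_h+Q_2(x)\omega_{\mathbb{S}^2},
\end{align*}
and reuse the component analysis from the proof of Proposition \ref{prop-a}.

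\textbf{Step 1: non-degeneracy forces $dr=0$.} First compute
\begin{align*}
Y\wedge Y=2Q_1Q_2\,\epsilon_h\wedge\omega_{\mathbb{S}^2}.
\end{align*}
So $Y$ is non-degenerate if and only if both $Q_1\neq0$ and $Q_2\neq0$. In the proof of Proposition \ref{prop-a}, the mixed CKY equation $\nabla_AY_{ij}=2g_{A[i}\xi_{j]}$ was used to force $Q_2=0$ whenever $dr\neq0$. I will make this explicit. Use the warped-product Christoffel symbols $\Gamma^k_{Aj}=r^{-1}r_A\delta^k_j$. The left side contains the term $-2r^{-1}r_AQ_2(\omega_{\mathbb{S}^2})_{ij}$, together with $\partial_AQ_2=0$ from closedness. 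The right side is built from $g_{Ai}=0$, so it vanishes. Hence $Q_2\,dr=0$. Non-degeneracy gives $Q_2\neq0$, so $dr=0$. This is the contrapositive of the step already used in Proposition \ref{prop-a}. The main care point is the index bookkeeping of this mixed component: one must check that the $\xi$-terms genuinely drop out, since $\xi$ has only $M^2$ components by spherical symmetry.

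\textbf{Step 2: set up the closedness computation.} Next, I will argue exactly as in the proof of Theorem \ref{th-4}. The conformally transported form is $\tilde Y=e^{3u}Y$, and $dY=0$. Preservation of closedness therefore reads
\begin{align*}
0=d\tilde Y=3e^{3u}\,du\wedge Y.
\end{align*}
Hence $du\wedge Y=0$.

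\textbf{Step 3: $du=0$.} There are two routes. The first is the route of Theorem \ref{th-4}: in dimension four, $\beta\mapsto\beta\wedge Y$ is an isomorphism on 1--forms when $Y$ is non-degenerate, so $du=0$. The second is direct and shows which component does the work. Since $u$ is spherically symmetric, $du=u_Adx^A$ lies in the $M^2$ directions, so $du\wedge\epsilon_h=0$ automatically for dimensional reasons. Thus
\begin{align*}
du\wedge Y=Q_2\,du\wedge\omega_{\mathbb{S}^2}.
\end{align*}
This is a nonzero 3--form unless $du=0$, because $Q_2\neq0$ and $du$ is transverse to the sphere. Either route gives $du=0$, so $u$ is locally constant, and since $M$ is connected it is constant. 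The deformation is therefore homothetic and the CKY--RS structure is trivial.

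I expect no serious obstacle beyond Step 1. The one subtlety to flag is that the two hypotheses interlock: non-degeneracy forces $Q_2\neq0$, which pushes us into the degenerate branch $dr=0$ excluded in Proposition \ref{prop-a}. It is precisely this nonzero $Q_2$ component that obstructs a non-constant $u$.
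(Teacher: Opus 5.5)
Your proposal is correct and follows the paper's proof essentially step for step: non-degeneracy gives $Q_1Q_2\neq0$; the mixed component $\nabla_AY_{ij}=2g_{A[i}\xi_{j]}$ forces $Q_2\,dr=0$ and hence $dr=0$; and preserved closedness gives $0=du\wedge Y=Q_2\,du\wedge\omega_{\mathbb{S}^2}$, hence $du=0$. Your explicit Christoffel computation and the alternative isomorphism route from Theorem \ref{th-4} only fill in details the paper leaves implicit (the $\xi$-terms in the mixed component vanish simply because $g_{Ai}=0$, whatever the components of $\xi$).
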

\begin{proof}
Let \eqref{gen-sph-sym} be the CCKY 2--form. Since $Y_{ab}$ is non--degenerate, $Q_j\neq0$. From the mixed CKY equation $\nabla_AY_{ij}=2g_{A[i}\xi_{j]}$, since $Q_j\neq0$, $dr=0$. 

Now, since $du\wedge\epsilon_h=0$ (these are both forms on $M^2$), we have
\begin{align*}
du\wedge Y=Q_2du\wedge\omega_{\mathbb{S}^2}.
\end{align*}
Suppose $Y_{ab}$ is also closed. Then, the exterior derivative of the conformally related CKY 2--form is
\begin{align*}
d\tilde Y&=e^{3u}(3du\wedge Y+dY)\\
&=3e^{3u}du\wedge Y.
\end{align*} 
If $\tilde Y_{ab}$ is also closed, then $du\wedge Y=0$. Since $Q_j\neq0$, $du=0$.\hfill\qed
\end{proof}

This result further demonstrates the rigidity of non--degeneracy and the transporting properties of the geometry of quantities of the CKY--RS structure. Similar feature was already observed in Theorem \ref{th-4}.

\subsection{Static spherically symmetric CKY-RS family} 

We will now restrict 4--dimensional static spherically symmetric (SSS) product. Here, we choose the ansatz for the background, choose a suitable conformal representative, and then construct a CKY--RS structure. Consider the ansatz for a SSS metric:

\begin{align}\label{sectionone}
    ds^{2} = -f dt^{2} + f^{-1} dr^{2} + r^{2}\left(d\theta^{2} + \sin^{2}\theta d\phi^{2}\right),
\end{align}
with $f = f\left(r\right)$ positive and arbitrary. This it the background metric $g_{ab}$ for the pending CKY--RS construction. By Proposition \ref{prop-a}, the unique spherically symmetric CCKY data is, up to a normalization,
\begin{align}
    Y &= rdt \wedge dr, \quad \xi^{a}= \partial_{t}. \label{sectiontwo}
\end{align}
(Note $Y \wedge Y =0$, and hence $Y_{ab}$ is degenerate). Let $u = u(r)$. Since $\xi_a$ is a KV for the background metric, the conformal metric is Einstein, and $\lambda = c$ (since $\partial_tu=0$). Let the conformal representative $\tilde g_{ab}$ be the Schwarzschild--de Sitter geometry:
\begin{align}\label{sectionfive}
    \tilde{d}s^{2} &= -\tilde{f} dt^{2} + \tilde{f}^{-1} d\tilde{r}^{2} + \tilde{r}^{2}\left(d\theta^{2} + \sin^{2}\theta d\phi^{2}\right),\\
 \tilde{f} &= 1 - \frac{2\mathcal{M}}{\tilde{r}} - \frac{c\tilde{r}^{2}}{3}.
\end{align}
By comparing the temporal and angular parts of (\ref{sectionone}) and (\ref{sectionfive}), we can combine the resulting relations to get 
\begin{align}\label{sectioneight}
    u^{\prime} = r^{-1}\left(\pm e^{u}-1\right).
\end{align}
We choose the positive branch. Using $\phi = e^{-u}$, equation (\ref{sectioneight}) has the general solution
\begin{align}
    \phi = 1 + c_{3}r,
\end{align}
where $c_{3}$ is a constant. Hence, 
\begin{align}\label{sectionnine}
    f\left(r\right) = \left(1 + c_{3} r\right)^{2} - \frac{2\mathcal{M}}{r}\left(1 + c_{3}r\right)^{3} - \frac{\lambda}{3}r^{2}.
\end{align}
The full CKY--RS data  on \eqref{sectionfive} is therefore
\begin{align*}
    \left(M, g_{ab}^{c_3}, -\ln\left(1 + c_{3}r\right), rdt \wedge dr, \partial_t,\lambda\right);
\end{align*}
where $c_{3}$ is a constant, $g_{ab}$ is given by (\ref{sectionone}) and $f\left(r\right)$ by (\ref{sectionnine}). 

\subsection{3-dimensional BTZ CKY--RS constructions}

\subsubsection{A 3-dimensional BTZ CKY--RS family}

The three-dimensional construction is identical in structure to the 4--dimensional one. Starting from the BTZ Einstein representative \cite{Banados1}
\begin{align}
    \tilde{ds}^{2}& = -\tilde{f} dt^{2} + \tilde{f}^{-1} dr^{2} + r^{2} d\varphi^{2},\label{btz-1}\\
 \tilde{f}&= \frac{r^{2}}{l^{2}} - \mathcal{M}=\frac{r^{2} - r_{h}^{2}}{l^{2}},\\
r_{h}^{2} &= l^{2}\mathcal{M}, 
\end{align}
($\mathcal{M} \neq 0$) and using $(Y = rdt \wedge dr, \xi^{a} = \partial_{t})$ the CKY--RS compatibility gives the exact solution as in the 4-dimensional: $\phi=1+c_3r$, which selects $f$:
\begin{align}
f=\frac{r^2}{l^2}-\mathcal{M}(1+c_3r)^2.\label{new-btz}
\end{align}
Hence, the full CKY--RS data  on \eqref{sectionfive} is
\begin{align*}
    \left(M, g_{ab}^{c_3}, -\ln\left(1 + c_{3}r\right), rdt \wedge dr, \partial_t,-2/l^2\right).
\end{align*}

\subsubsection{A BTZ realization of the Ricci--flat conformal branch}

The static $c_3$--family above is a CKY--RS construction obtained by conformally deforming the Einstein BTZ metric; it therefore does not constitute a direct realization of the Einstein background analysis of Section \ref{sec-4}. In the present construction, the CKY--RS structure is constructed directly on the BTZ metric, while the conformally related representative is flat and trivially Einstein. This example therefore provides anexplicit realization of the conformal Einstein setting analyzed in Section \ref{sec-4}, including its non--constant BEC and $K\neq0$ branch.

Consider the $3$--dimensional (non--rotating) BTZ metric \eqref{btz-1}, with $\mathcal{M} \neq 0$. This metric is Einstein with $R_{ab} = -\left(2/l^{2}\right)g_{ab}$. This metric is in the $\phi$ non--constant and $K\neq0$ branch. Hence, $K = -1/l^{2}$.  The first integral \eqref{first-int-1} is therefore
\begin{align}\label{sectionbfour}
    |\nabla \phi |^{2} = (l^{-1}\phi)^{2} + C.
\end{align}
Combining this with the expression for the conformal Ricci curvature and the Hessian BEC gives $\tilde{R}_{ab} = -2C\tilde{g}_{ab}$, where $C$ is a constant. The Einstein constant of the conformal metric is related to the compatibility constant $c$ by $c = -2 C$.

The CCKY data, as before, is 
\begin{align}
    Y &= rdt \wedge dr,\quad \xi^{a}= \partial_{t}.
\end{align}
Now, the soliton equation gives
\begin{align}\label{sectionbsixx}
    \mathcal{L}_{\xi}\tilde{g}_{ab} = 2\left(\lambda + 2C\right)\tilde{g}_{ab}.
\end{align}
Since
\begin{align*}
\mathcal{L}_{\xi}\tilde{g}_{ab} = \mathcal{L}_{\partial_{t}}\tilde{g}_{ab}= -2\left(\phi^{-1}\phi_{t}\right)\tilde{g}_{ab},
\end{align*} 
we have
\begin{align}\label{sectionbseven}
    \frac{\phi_{t}}{\phi} = -\left(\lambda + 2C\right).
\end{align}
This equation admits the solution $\phi = \zeta\left(r, \varphi\right)e^{\bar{w }t}$, where $\bar w = -(\lambda + 2C)$. 
The Hessian BEC gives the following set of PDEs: $\bar w \zeta_{\phi} =0$, and
\begin{align}
    \left(\bar f\bar f^{\prime}\right)\zeta^{\prime} - 2\left(\bar w^{2} + \frac{\bar f}{l^{2}}\right)\zeta &= 0,\label{sectionbeleven}\\
    \bar f\zeta^{\prime} - \frac{r}{l^{2}}\zeta &= 0.\label{sectionbtwelve}
\end{align}
Comparing \eqref{sectionbeleven} and \eqref{sectionbtwelve} we get $\bar w = \pm l^{-2}r_{h}= \pm l^{-1}\sqrt{\mathcal{M}}$. Since $\mathcal{M}\neq0$ ($\mathcal{M}=0$ forces the $K=0$ branch), $\bar w\neq0$, and therefore $\zeta=\zeta(r)$. One explicitly integrates \eqref{sectionbtwelve} to obtain the time--dependent conformal factor
\begin{align}
\phi=\phi_0\sqrt{r^2-r_h^2}e^{\bar wt}.\label{td-phi}
\end{align}

The norm of the gradient of \eqref{td-phi} is
\begin{align}
|\nabla\phi|^2=(l^{-1}\phi)^2.
\end{align}
Compare to the first integral \eqref{sectionbfour} and it follows that $C=0$. Hence, $\tilde R_{ab}=0$. Since the manifold is 3--dimensional, $\tilde R_{abcd}=0$ and therefore $\tilde g_{ab}$ is flat. The soliton constant is $\lambda=-\bar w=\pm l^{-1}\sqrt{\mathcal{M}}$. The full CKY--RS data on the 3--dimensional (non-rotating) BTZ, for $\mathcal{M}>0$ and $r>r_h$, is therefore given by
\begin{align*}
(M,g_{ab},u_0\pm l^{-1}\sqrt{\mathcal{M}}t-\ln\sqrt{r^2-r_h^2},rdt\wedge dr,\partial_t,\mp l^{-1}\sqrt{\mathcal{M}}).
\end{align*}

We emphasize that $\phi\rightarrow0$ at $r=r_h$, and therefore becomes a conformal boundary of the transformed metric for $\tilde g_{ab}$. That is, $u$ diverges there, and therefore the conformal factor is singular at the horizon. This is why we carefully defined the CKY--RS structure away from the horizon. 

Since the conformal metric is locally flat, it is then natural to express it in Minkowski coordinates. The coordinate transformation
\begin{align*}
T=\frac{l}{\phi}\frac{r}{r_h}\cosh\left(\frac{r_h}{l}\varphi\right),\quad X=\frac{l}{\phi}\frac{r}{r_h}\sinh\left(\frac{r_h}{l}\varphi\right),\quad Z=\frac{l}{\phi},
\end{align*}
brings the time--dependent conformal metric $\tilde g_{ab}$ into the locally flat form
\begin{align*}
ds^2=-dT^2+dX^2+dZ^2.
\end{align*} 


\section{Discussion and outlook}\label{sec-6}


We have introduced conformal Killing-Yano Ricci soliton (CKY--RS) geometries as a class of conformal Ricci solitons in which the soliton vector field is fixed by a rank--2 CKY tensor through its divergence. The CKY--RS structure condition couples CRS to the hidden symmetry structure of the background geometry and leads to non-trivial compatibility conditions involving the Weyl, Cotton, and CKY tensors. The resulting equations are therefore naturally overdetermined. A useful consequence of the coupling in the CKY--RS structure is a CKY--Cotton current, which is conserved. Furthermore, contraction of the CRS integrability condition with the CKY form produces a CKY--Cotton identity in terms of the CKY data and the CRS obstruction tensor (which itself contains the CKY data via the soliton vector), through the conservation of the current. A systematic and comprehensive analysis of the differential identities satisfied by this coupled obstruction tensor may therefore lead to further curvature restrictions and, potentially, to more general classification beyond the cases considered in this work.

Another important consequence of this new structure is the rigidity of the four-dimensional Lorentzian metrics which are consequential to general relativity. For a non--degenerate closed CKY tensor whose associated vector is a conformal Killing vector, the conformally related metric of the CKY--RS structure is Einstein and therefore locally belongs to the Kerr--NUT--(A)dS family of metrics. Conversely, on a Kerr--NUT--(A)dS background with the usual principal CKY 2--form, preservation of the principal closed-CKY structure under the conformal transformation forces the CKY--RS structure to be trivial (constant conformal factor, up to exponentiation, $\phi$). Thus, within this natural principal CKY set-up, it appears that the the principal CKY--RS structure is rigid against non-trivial conformal deformations.

The analysis on Einstein backgrounds simplifies and gives a more transparent picture. If the background is Einstein, the CKY--associated vector field becomes Killing vector, and the conformal factor satisfies both an eigenvalue equation along the CKY--flow and an Obata-type Hessian equation. These two equations impose different types of restrictions. The flow equation on one hand dynamically constrains the conformal factor along the CKY vector orbits, whereas the Hessian equation constrains the background geometry. Some special cases, for example periodic orbits and the global property of compactness, immediately rules out non-zero eigenvalues. The Hessian equation on the other hand forces the $d\phi\neq0$ case into a restricted class of geometries. For the Ricci-flat branch, $du$ is parallel, leading to the background metric locally splitting as a product when $|\nabla \phi|^2\neq0$ and to a Brinkmann geometry when $|\nabla \phi|^2=0$. For the non-Ricci-flat branch, we obtain a Weyl-alignment compatibility condition: $\mathcal{W}^d_{\ abc}\phi_d=0$. It would also be interesting to study to what extent this alignment condition, in addition to assumptions from the CKY structure, can be used for a complete local classification.

Finally we construct simple but explicit examples of CKY--RS structures on Lorentzian geometries, specialized to the spherically symmetric setting. In this setting, the associated CKY vector is precisely the Kodama vector, providing a geometric interpretation of the preferred soliton vector field. The Kodama vector field is defined without requiring a stationary Killing symmetry and plays an important role in the analysis of dynamical spherically symmetric spacetimes. Hence, its appearance here, in addition to the recent identification of the Kodama vector as arising from a CCKY 2--form strengthens the interpretation of the present CKY--RS construction as a hidden symmetry formulation of geometrically preferred flows in general relativity \cite{Kinoshita1}. A natural follow--up investigation would be to investigate whether the equations of the CKY--RS structure yield useful quasi--local or conserved quantities associated with the Kodama flow, and whether these quantities admit interpretations in terms of the field equations of general relativity. 

Explicit CKY--RS structures were constructed in 4 and 3--dimensional static spherical (circular) symmetry, producing a parametrized family of metrics associated to a CKY--RS structure. A CKY--RS structure was then constructed on the 3--dimensional (non--rotating) BTZ metric, identifying a time--dependent flat conformal representative for the structure. This particular BTZ example illuminates something very crucial about the CKY--RS structure: the conformal representative selected by a CKY--RS structure need not preserve the curvature scale. In this case the time-dependent representative is flat, with the conformal factor becomes singular at the BTZ horizon. This exposes a global issue: the CKY--RS construction is fundamentally local, and a conformal factor admissible on some open region may fail to extend smoothly across a horizon or some special surface. Consequently, a complete theory of CKY--RS structures must distinguish local and global classification results. Questions concerning conformal boundaries, horizon regularity, completeness, and the global behaviour of the CKY flow are therefore natural extensions of the present work. 

Several other open questions remain. First, the full coupled integrability system is overdetermined, and it would worthwhile analyzing whether additional curvature restrictions or classificationresults follow from higher--order integrability conditions. Second, it would be natural to extend the rigidity analysis to higher dimensions, where principal CKY tensors provide a canonical framework for Kerr--NUT--(A)dS geometries, and to determine whether analogous obstructions to non-trivial CKY--RS structures persist. A systematic classification of CKY--RS structures on backgrounds besides Einstein metrics, without symmetry assumptions, is also worth pursuing. 

Another interesting direction is to determine whether CKY--RS structures arise naturally within known families of exact solutions of general relativity beyond the static spherical and BTZ examples constructed here, including rotating spacetimes and higher-dimensional black holes. Such developments may clarify whether the CKY--RS framework is primarily a classification tool for conformal geometric structures or whether it also selects physically distinguished families of exact spacetime metrics.

The results in this work therefore establish the basic structure and several rigidity and classification results of the CKY--RS structure while leaving the general classification, higher-dimensional rigidity, and global existence problems open to future pursuits.


\section*{Acknowledgement}


M. H acknowledges the support of the High-level Talent Research Start-up Project Funding of Henan Academy of Sciences (Project No.: 241819245). A. S research is supported by the Institute of Mathematics, funded through the High-level Talent Research Start-up Project Funding of the Henan Academy of Sciences (Project No.: 251819085). Y.-K. L is supported by Xiamen University Malaysia Research Fund (Grant no.: XMUMRF/2021-C18/IPHY/0011).


\begin{thebibliography}{99}


\bibitem{Hamilton1} 
R. S. Hamilton,
\textit{Three-manifolds with positive Ricci curvature},
J. Differential Geom., \textbf{17}, 255 (1982).

\bibitem{Hamilton2} 
R. S. Hamilton,
\textit{The Ricci flow on surfaces},
in Mathematics and General Relativity, Contemporary Mathematics \textbf{71}, edited by J. A. Isenberg (American Mathematical Society, Providence, RI, 1988), pp. 237–262.

\bibitem{Perelman1} 
G. Perelman,
\textit{The entropy formula for the Ricci flow and its geometric applications},
arXiv:math/0211159v1[math.DG] (2002).

\bibitem{Perelman2} 
G. Perelman,
\textit{Ricci flow with surgery on three manifolds},
arXiv:math/0303109v1[math.DG] (2003).

\bibitem{Brinkmann1} 
H. W. Brinkmann,
\textit{Riemann spaces conformal to Einstein spaces},
Math. Ann., \textbf{91}, 269 (1924).

\bibitem{Cao1} 
H.-D. Cao and Q. Chen,
\textit{On locally conformally flat gradient steady Ricci solitons},
Trans. Amer. Math. Soc., \textbf{364}, 2377 (2012).

\bibitem{Cao2} 
H.-D. Cao and Q. Chen,
\textit{On Bach-flat gradient shrinking Ricci solitons},
Duke Math. J., \textbf{162}, 1149 (2013).

\bibitem{Gover1} 
A. R. Gover and P. Nurowski,
\textit{Obstructions to conformally Einstein metrics in $n$ dimensions},
J. Geom. Phys., \textbf{56}, 450 (2006).

\bibitem{Catino1} 
G. Catino, P. Mastrolia, D. D. Monticelli, and M. Rigoli,
\textit{Conformal Ricci solitons and related integrability conditions},
Adv. Geom., \textbf{16}, 301 (2016).

\bibitem{Kashiwada1} 
T. Kashiwada,
\textit{On conformal Killing tensor},
Natur. Sci. Rep. Ochanomizu Univ., \textbf{19}, 67 (1968).

\bibitem{Kashiwada2} 
T. Kashiwada and S. Tachibana,
\textit{On the integrability of Killing-Yano’s equation},
J. Math. Soc. Japan, \textbf{21}, 259 (1969).

\bibitem{Yano1} 
K. Yano,
\textit{Some remarks on tensor fields and curvature},
Ann. Math., \textbf{55}, 328 (1952).

\bibitem{Walker1} 
M. Walker and R. Penrose,
\textit{On quadratic first integrals of the geodesic equations for type (\{22\}) spacetimes},
Commun. Math. Phys., \textbf{18}, 265 (1970).

\bibitem{Frolov1} 
V. P. Frolov and D. Kubiz\u{n}\'{a}k,
\textit{Hidden symmetries of higher-dimensional rotating black holes},
Phys. Rev. Lett., \textbf{98}, 011101 (2007).

\bibitem{Frolov2} 
V. P. Frolov and D. Kubiz\u{n}\'{a}k,
\textit{Higher-dimensional black holes: Hidden symmetries and separation of variables},
Class. Quantum Grav., \textbf{25}, 154005 (2008).

\bibitem{Krtous1} 
P. Krtou\u{s}, V. P. Frolov, and D. Kubiz\u{n}\'{a}k,
\textit{Hidden symmetries of higher-dimensional black holes and uniqueness of the Kerr-NUT-(A)dS spacetime},
Phys. Rev. D, \textbf{98}, 064022 (2008).

\bibitem{Kinoshita1} 
S. Kinoshita,
\textit{Geometrical origin of the Kodama vector},
Phys. Rev. D, \textbf{110}, 044056 (2024).

\bibitem{Banados1} 
M. Ba\~{n}ados, C. Teitelboim, and J. Zanelli,
\textit{The black hole in three-dimensional spacetime},
Phys. Rev. Lett., \textbf{69}, 1849 (1992).

\end{thebibliography}
\end{document}